\documentclass[11pt]{amsart}
\usepackage{amsmath, amssymb, amsthm}
\usepackage{enumitem}                                 % The preamble begins here.
\usepackage[margin=1.2in]{geometry}

\AtEndDocument{\bigskip{\footnotesize%
\textsc{Department of Mathematics, the Pennsylvania State University, University
Park, PA 16802, USA} \par
  \textit{E-mail address} \texttt{ aka5983@psu.edu
}}

\bigskip{\footnotesize%
\textsc{School of Mathematics, University of Birmingham, Birmingham, B15 2TT, UK} \par
\textit{E-mail address} \texttt{  simonbaker412@gmail.com
}}

\bigskip{\footnotesize%
\textsc{Department of Mathematics, the University of British Columbia, 1984 Mathematics Road Vancouver, BC, V6T 1Z2, Canada} \par
\textit{E-mail address} \texttt{ pshmerkin@math.ubc.ca
}
}}

%\usepackage{cite}

%\usepackage{natbib}
%\bibliographystyle{plain}

% \tolerance=10000
%\tolerance=10000

\DeclareMathOperator{\supp}{supp}

\newtheorem{theorem}{Theorem}[section]

\newtheorem{Counter-example}[theorem]{Counter example}
\newtheorem{Claim}[theorem]{Claim}

\newtheorem{Lemma}[theorem]{Lemma}

\newtheorem*{theorem*}{Theorem}

 % restrict a CG to a subset of objects

  % the letter used for the Gross skeleton

 % the char 0 field I will use

 % object set of a category
 % object set of a category
 % object set of a category
 % object set of a category

%\newcommand{\proof}{{\par\noindent {\bf Proof}\space\space}}
 % later this should, perhaps, be changed to =
 % source closure
 % target closure

\newcommand{\ignore}[1]{}

\usepackage{xcolor}
\usepackage[pagebackref]{hyperref}
\hypersetup{
   colorlinks,
    linkcolor={red!60!black},
    citecolor={blue!60!black},
    urlcolor={blue!90!black}
}

% already exists: % \newcommand{\ker}{{\rm ker}}

% \newcommand{\simexp}[2]{{ {\rm SimExp}\left({#1},{#2}\right)}}
% \newcommand{\simexp}[2]{{ {#1}_.^{{#2}_.}}}

% \newcommand{\rder}{{\bf R}}

      % cohomological complexity

%\renewcommand{\complement}[1]{#1^{\rm c}}

%% Commonly forgotten macros
%% \vee \wedge \xrightarrow

\title{On normal numbers and self-similar measures}

\author{Amir Algom, Simon Baker, and Pablo Shmerkin}
\date{}

\thanks{P.S. was supported by an NSERC discovery grant}

\begin{document}
\maketitle
\begin{abstract}
Let $\lbrace f_i(x)=s_i \cdot x+t_i \rbrace$ be a self-similar IFS on $\mathbb{R}$ and let $\beta >1$ be a Pisot number. We prove that if $\frac{\log |s_i|}{\log \beta}\notin \mathbb{Q}$ for some $i$   then for every $C^1$ diffeomorphism $g$ and every non-atomic self similar measure $\mu$, the measure $g\mu$ is supported on numbers that are normal in base $\beta$.
\end{abstract}

\section{Introduction}

\subsection{Background and  main result} \label{Section statement}
Let $b$ be an integer greater or equal to $2$. Let $T_b$ be the times-$b$ map,
$$T_b (x) = b\cdot x \mod 1, \, x\in \mathbb{R}.$$
A number $x\in \mathbb{R}$ is called $b$-normal, or normal in base $b$, if its orbit $\lbrace T_b ^k (x) \rbrace_{k\in \mathbb{N}}$  equidistributes for the Lebesgue measure on $[0,1]$. In 1909 Borel proved  that Lebesgue almost every $x$ is \textit{absolutely normal}, that is, normal in all bases. In the absence of obvious obstructions, it is believed that this phenomenon should continue to hold true for typical elements of well structured sets with respect to appropriate measures. The purpose of this paper is to make a contribution in this direction for a well studied class of fractal measures.

Before stating our main result, we recall that one may define digit expansions of numbers in non-integer bases as well: Following R\'{e}nyi \cite{Renyi1967rep}, for $\beta>1$ we define the $\beta$-expansion of $x\in (0,1)$ to be the lexicographically largest sequence $x_n \in \lbrace 0,...,[\beta]\rbrace$ such that $x=\sum_{n=1} ^\infty x_n \beta^{-n}$. This sequence is obtained from the orbit of $x$ under the map $T_\beta(x)= \beta\cdot  x \mod 1$ similarly to the integer case. It is known that $T_\beta$ admits a unique absolutely continuous invariant measure, commonly referred to as the Parry measure \cite{Parry1960beta}. When $\beta$ is an integer then the Parry measure is just the Lebesgue measure restricted to $[0,1)$. Thus, in analogy with the integer case, we say that $x$ is $\beta$-normal if it equidistributes under $T_\beta$ for the Parry measure. Our main result will allow us to conclude that for certain fractal measures, a typical element will be $\beta$-normal, where $\beta>1$ may be a non-integer.

In this paper we will work with self-similar measures on $\mathbb{R}$. These are defined as follows:  Let $\Phi= \lbrace f_1,...,f_n \rbrace$ be a finite set of real non-singular contracting similarity maps of a compact interval $J\subset \mathbb{R}$. That is, for every $i$ we can write $f_i(x)=s_i \cdot x+t_i$ where $s_i \in (-1,1)\setminus \lbrace 0 \rbrace$ and $t_i \in \mathbb{R}$, and $f_i(J)\subseteq J$. We will refer to $\Phi$ as a \textit{self similar IFS} (Iterated Function System).  It is well known that there exists a unique compact set $\emptyset \neq K=K_\Phi\subseteq J$ such that
$$ K = \bigcup_{i=1} ^n f_i (K).$$
The set $K$ is called a \textit{self-similar set}, and the \textit{attractor} of the IFS $\Phi$.  We always assume that there exist $i\neq j$ such that the fixed point of $f_i$ is not equal to the fixed point of $f_j$. This ensures that $K$ is infinite.

 Next, let $\textbf{p}=(p_1,...,p_n)$ be a strictly positive probability vector, that is, $p_i >0$ for all $i$ and $\sum_i p_i =1$. It is well known that there exists a unique Borel probability  measure $\mu$ such that
$$\mu = \sum_{i=1} ^n p_i\cdot  f_i\mu,\quad \text{ where } f_i \mu \text{ is the push-forward of } \mu \text{ via } f_i.$$
The measure $\mu$ is called a \textit{self-similar measure}, and is supported on $K$.  Our assumptions that $K$ is  infinite and  that $p_i>0$ for every $i$ are known to imply that $\mu$ is non-atomic. In particular, all self-similar measures in this paper are  non-atomic.

Recall that $\beta>1$ is called a \textit{Pisot} number if it is an algebraic integer whose algebraic conjugates are all of modulus strictly less than $1$. Note that every integer larger than $1$ is a Pisot number. Also, we will write $a\sim b$ if the real numbers $a, b \neq 0$ satisfy that $\frac{\log |a|}{\log |b|} \in \mathbb{Q}$. Otherwise, we write $a \not \sim b$, in which case $a,b$ are said to be \textit{multiplicatively independent}. Finally, given a measure $\mu$ on $\mathbb{R}$ and $\beta>1,$ we say that $\mu$ is pointwise $\beta$-normal if $\mu$ almost every $x$ is $\beta$-normal. We are now ready to state the main result of this paper:
\begin{theorem} \label{Main Theorem}
Let $\beta >1$ be a Pisot number, and let $\mu$ be a non-atomic self-similar measure with respect to the self-similar IFS $\lbrace f_i(x)=s_i \cdot x+t_i \rbrace$ and a strictly positive probability vector. If there is some $i$ such that $s_i \not \sim \beta$ then $\mu$ is pointwise $\beta$-normal. Furthermore, $g\mu$ is pointwise $\beta$-normal for all $g\in \text{diff}^1 (\mathbb{R})$.
\end{theorem}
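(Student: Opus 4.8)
The plan is to reduce the statement about $\beta$-normality to an equidistribution statement for the orbit $\{T_\beta^k x\}$, and then prove the latter via a Fourier-analytic / exponential-sum criterion combined with a structural analysis of self-similar measures under the action of multiplication by $\beta$. Concretely, by Weyl's equidistribution criterion adapted to $\beta$-expansions (via the natural extension / Parry measure coding), it suffices to show that for $\mu$-a.e.\ $x$ and every nontrivial character, the Birkhoff averages of that character along the $T_\beta$-orbit of $x$ tend to zero. Using the standard reduction from $\beta$-normality to normality with respect to the associated subshift of finite type, one is led to showing that for every fixed $q \in \mathbb{Z}\setminus\{0\}$,
\[
\frac{1}{N}\sum_{k=0}^{N-1} e^{2\pi i q \beta^k x} \longrightarrow 0 \quad \text{for } \mu\text{-a.e. } x.
\]
This is the heart of the matter, and it is where the Pisot hypothesis and the multiplicative independence $s_i \not\sim \beta$ enter.

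To establish the exponential-sum decay, I would first reduce the $C^1$-diffeomorphism case to the case $g = \mathrm{id}$: since $\beta$-normality is a local, bi-Lipschitz-type invariant only in a limited sense, one should instead use that a $C^1$ diffeomorphism is, at small scales, arbitrarily close to an affine map, and that self-similar measures have a precise multifractal/dynamical self-conformal structure; the trick (following Hochman--Shmerkin-type arguments, or Hochman's work on normal numbers and fractal measures) is that normality is preserved under $C^1$ maps for measures with enough "scale-invariance," so this reduction is essentially a smoothing/local-linearization argument plus a Borel--Cantelli step. Then, for $g=\mathrm{id}$, I would exploit the Pisot property: for a Pisot $\beta$, the sequence $\{\beta^k \xi\}$ for $\xi$ in suitable sets equidistributes unless there is an algebraic obstruction, and crucially the self-similar measure $\mu$ — because some contraction ratio $s_i$ is multiplicatively independent from $\beta$ — cannot be supported on such an algebraically special set. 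I would make this quantitative by decomposing $\mu$ using the random-walk / CP-chain structure: writing $x = f_{i_1}\circ\cdots\circ f_{i_k}(\text{something})$, the point $\beta^k x$ decomposes into a "deterministic Pisot part" and a "genuinely equidistributing part" coming from the digits $i_j$ whose logarithms are irrationally related to $\log\beta$.

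The key technical engine I would invoke is a result on the Fourier decay or the equidistribution of $\{\beta^n\}$-dilates of self-similar measures: one wants to show that the pushforwards $(\times \beta^n)_*\mu \bmod 1$ converge weakly to the Parry measure (or at least that their Cesàro averages do), for $\mu$-a.e.\ starting configuration — this is an "ergodicity of the $\times\beta$ action relative to the self-similar structure" statement. Here the multiplicative independence $s_i \not\sim\beta$ guarantees that the orbit does not get trapped in a finite set of scales (no resonance), while the Pisot property ensures that the "error terms" from the conjugates of $\beta$ decay geometrically rather than equidistributing on their own, so that the only surviving contribution is the desired one. Assembling this: (i) equidistribution in the natural-extension coordinates gives $\beta$-normality; (ii) a Borel--Cantelli argument upgrades a.e.-convergence of Cesàro averages to genuine equidistribution of the orbit; (iii) the $C^1$ case follows by local linearization.

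I expect the main obstacle to be step (ii)/the core exponential-sum estimate — controlling $\frac1N\sum_k e^{2\pi i q\beta^k x}$ uniformly enough in $q$ and $x$ to get an almost-sure statement rather than merely an $L^2$ or in-measure statement. The difficulty is that the scales $\beta^k$ are deterministic while the randomness in $\mu$ lives in the digits, so the decorrelation one needs is between a deterministic orbit and a random measure; handling the Pisot "carries" (the conjugate contributions, which do not shrink the relevant modulus quickly in an absolute sense) and ruling out all algebraic coincidences between powers of $\beta$ and the semigroup generated by the $s_i$ is the delicate part, and is presumably where the bulk of the paper's work — likely via a careful entropy/dimension increment argument or a Erdős--Kahane-type analysis of Pisot expansions — will go.
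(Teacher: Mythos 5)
Your proposal rests on a Fourier/exponential-sum engine that cannot work in the generality of the theorem. The a.e.\ decay of $\frac1N\sum_{k<N}e^{2\pi i q\beta^k x}$ is exactly what the Davenport--Erd\H{o}s--LeVeque / Fourier-decay route delivers, but the hard cases covered by Theorem \ref{Main Theorem} are precisely those where $\widehat{\mu}$ does \emph{not} tend to zero: for instance the Cantor--Lebesgue measure (all $s_i=1/3$, $\beta=2$), or any homogeneous IFS whose common contraction is the reciprocal of a Pisot number. For such $\mu$ there is no Fourier decay to feed into an $L^2$/Borel--Cantelli argument, and your sketch offers no substitute mechanism for the pointwise decorrelation between the deterministic scales $\beta^k$ and the measure. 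You correctly sense that the "deterministic orbit vs.\ random measure" decorrelation is the crux, but the tool you name for it is the one that provably gives nothing here. The paper deliberately avoids Fourier analysis altogether: the engine is the Hochman--Shmerkin normality criterion for measures whose scenery flow generates an ergodic distribution whose pure point spectrum avoids nonzero integer multiples of $1/\log\beta$, and the arithmetic hypothesis $s_i\not\sim\beta$ enters only through a computation of that spectrum (via eigenfunctions of a suspension flow over a Markov chain evaluated at a periodic point), not through any exponential-sum estimate.

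The second, and in some ways more basic, gap is that your proposal never engages with the absence of separation conditions, which is the actual content of the theorem beyond prior work. Without the open set or weak separation condition the symbolic coding $x=f_{i_1}\circ\cdots\circ f_{i_k}(\cdot)$ is badly non-injective, the measure is not known to be uniformly scaling, and the "random-walk / CP-chain structure" you invoke is exactly what is unavailable. The paper's central new step is a disintegration $\mu=\int\eta^{(\omega)}\,d\mathbb{P}(\omega)$ over a Bernoulli model of random measures, each of which satisfies a \emph{dynamical} self-similarity relation with \emph{strong} separation (built by grouping two well-separated cylinders with equal contraction into one random choice and treating the remaining maps as degenerate one-map systems); only on these fibers does the scenery-flow analysis go through, and the arithmetic condition is checked to survive the disintegration. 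Your proposal has no counterpart to this step, so even granting a working criterion for well-separated measures, the reduction from a general overlapping $\mu$ is missing. Finally, the $C^1$ case is not an add-on obtained by "local linearization plus Borel--Cantelli"; in the paper it comes directly from the same Hochman--Shmerkin theorem applied to the generated distribution, and making your linearization heuristic rigorous would essentially require rebuilding that machinery.
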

We emphasize that  no separation condition is imposed on the underlying IFS.  Theorem \ref{Main Theorem} is sharp in the sense that there are many IFSs such that $s_i=n$ for all $i$ and some integer $n$, and no element in the attractor is $n$-normal  - for example, no element in the middle-thirds Cantor set is $3$-normal. Nonetheless, at least for integer $\beta$, we will soon recall some recent results showing that even if all the contractions $s_i \sim \beta$ it is still possible that every self-similar measure is pointwise $\beta$-normal. We also note that there are other ways self-similar measures can lead to equidistribution - see e.g. \cite{bakwer2021equi}

Theorem \ref{Main Theorem}  extends a long line of research on pointwise normality for dynamically defined measures, and provides a unified proof for many recent results regarding  self similar measures.  The first instances of a special case of Theorem \ref{Main Theorem} were obtained by Cassels \cite{Cassels1960normal} and Schmidt \cite{Schmidt1960normal} around 1960. They considered the Cantor-Lebesgue measure on the middle thirds Cantor set, proving that almost every point is normal to base $n$ as long as $n \not \sim 3$.  This was later generalized by Feldman and Smorodinsky \cite{Feldman1992normal} to all non-degenerate Cantor-Lebesgue measures   with respect to any base $b$. We remark that the work of Host \cite{Host1995normal} and the subsequent works of  Meiri \cite{Meiri1998host} and Lindenstrauss \cite{Elon2001host} about pointwise normality for $T_n$-invariant measures are also related to this, though in the somewhat different context of Furstenberg's $\times 2 ,\times 3$ Conjecture.

In 2015, Hochman and Shmerkin \cite[Theorem 1.4]{hochmanshmerkin2015} proved Theorem \ref{Main Theorem} under the additional assumptions that $s_i>0$ for all $i$, and the intervals $f_i(J)$, $i\in J$ are disjoint except potentially at their endpoints (a condition stronger than the open set condition). In fact, \cite[Theorem 1.2]{hochmanshmerkin2015} provides a general criterion for a uniformly scaling measure (that is, a measure such that at  almost every point its scenery flow equidistributes for the same  distribution) to be pointwise $\beta$-normal. We refer the reader to Section \S\ref{Section outline} for the relevant definitions about the scenery flow. The main reason they imposed this separation condition was to ensure that the corresponding self-similar measures are indeed uniformly scaling. This fact had previously been established by Hochman \cite[Section 4.3]{hochman2010dynamics}. Recently, Py{\"o}r{\"a}l{\"a} \cite{Aleksi2021flow} proved that it suffices to assume that the IFS satisfies the weak separation condition \cite{Lau1999weak, Zerner1996weak} in order for self similar measures to be uniformly scaling. Utilizing this result and the methods from \cite{hochmanshmerkin2015}, Py{\"o}r{\"a}l{\"a} obtained a version of Theorem \ref{Main Theorem} under the additional assumption that the IFS satisfies the weak separation condition \cite[Corollary 5.4]{Aleksi2021flow}. We note that there are many self-similar sets which satisfy neither the open set nor the weak separation condition; indeed, this is often the generic behaviour in parametrized families of self-similar sets: see \cite{PSS00}.

 Very recently, Algom, Rodriguez Hertz, and Wang, proved that if the Fourier transform of a self similar measure decays to $0$ at $\infty$ then almost every point is absolutely normal \cite[Theorem 1.4]{algom2020decay}. Combining this with recent progress on the Fourier decay problem for self similar measures leads to many special cases of Theorem \ref{Main Theorem}: For example, by a result of Li and Sahlsten \cite{li2019trigonometric}, if there are $i,j$ such that $s_i \not \sim s_j$ then all self similar measures have this property, and consequently are supported on absolutely normal numbers. Furthermore, via \cite[Theorem 1.4]{algom2020decay} and the recent work of Br\'{e}mont \cite{bremont2019rajchman} one obtains many instances of self similar measures that are supported on $n$-normal numbers even though the underlying IFS satisfies $s_i \sim n$ for all $i$. Examples of this form were also recently obtained by Dayan, Ganguly, and Weiss \cite{dayan2020random}.

The results of \cite{algom2020decay} are related to a classical theorem of Davenport-Erd\H{o}s-LeVeque \cite{Davenport1964Erdos}. This result states that if the Fourier transform of a Borel measure decays to zero sufficiently quickly, then a typical number with respect to this measure will be normal (for integer bases). Recall that by \cite[Theorem 1.4]{algom2020decay}, this property holds for self-similar measures with a decaying Fourier transform, regardless of the rate of decay. In \cite{Shmerkin2018mos} it was shown that for any homogeneous IFS with non-atomic self-similar measure $\mu$, if $g$ is a $C^2$ function satisfying $g''>0$ then the Fourier transform of $g\mu$ decays to zero sufficiently quickly so that the Davenport-Erd\H{o}s-LeVeque theorem applies. For dynamically defined measures there are many recent results that establish a sufficiently fast rate of decay for this result to apply: \cite{algom2021decay, Sahl2016Jor, sahlsten2020fourier, li2019trigonometric, solomyak2019fourier, varju2020fourier, rapaport2021rajchman}    to name just a few examples (see e.g. \cite{algom2020decay} for many more references).

Let us compare Theorem \ref{Main Theorem} with the results of \cite{algom2020decay, hochmanshmerkin2015, Aleksi2021flow}: First, we fully relax the separation conditions on the IFS from \cite{hochmanshmerkin2015, Aleksi2021flow}. Second, while no separation conditions are needed for \cite[Theorem 1.4]{algom2020decay}, it  does not cover two important cases where Theorem \ref{Main Theorem} does apply: The  general case when $s_i \sim s_j$ for all $i,j$, and pointwise normality for non-integer Pisot numbers. Third, our work gives a simple and unified approach to this problem that is relatively self-contained. In particular, we do not need to invoke any results on the Fourier transform of self-similar measures as in \cite{algom2020decay}. On the other hand, we note that the methods and results of both \cite{hochmanshmerkin2015} and \cite{algom2020decay} apply for more general self-conformal IFS's (i.e. they allow for non-affine smooth maps in the IFS), and that most of the results of \cite{hochmanshmerkin2015} work for a broader class of measures on the fractal.

We conclude this section with a brief informal discussion of our method. It consists of three steps: Let $\mu$ be a self similar measure as in Theorem \ref{Main Theorem}, and let $\beta$ be a Pisot number such that $\beta \not \sim s_i$ for some $i$. We want to show that $\mu$ is pointwise $\beta$-normal. The first and most critical step  is to express $\mu$ as an integral over a certain family of random measures. This is based upon a technique that first appeared in \cite{Galcier2016Lq}, and was subsequently applied in \cite{Antti2018orponen} and \cite{Shmerkin2018Solomyak}, to study the dimension of $\mu$. What was
important for these authors was that these random measures could be expressed as an infinite
convolution. The crucial new feature in our construction is to ensure that these random measures typically satisfy a certain dynamical self-similarity relation with \textit{strong separation}. We are also able to preserve the arithmetic condition $\beta \not \sim s_i$, in some sense, into these random measures.

In the second step of the proof, we show that the scenery flow of typical random measures satisfying this dynamical self-similarity relation with strong separation equidistributes for the same non-trivial ergodic distribution, which we construct explicitly. %To do this, we use the latter properties in order to apply  tools from ergodic theory.

The third and final step of the proof is to use spectral analysis of this distribution together with independence from $\beta$ to conclude, via  \cite[Theorem 1.2]{hochmanshmerkin2015}, that almost every random measure is pointwise $\beta$-normal.  Since in the first step we disintegrated $\mu$ according to these measures, Theorem \ref{Main Theorem} follows. The assertion made in Theorem \ref{Main Theorem} about $C^1$ pushforwards of $\mu$ also follows along these lines, by noting that $C^1$ images of typical random measures will still be pointwise $\beta$-normal by \cite[Theorem 1.2]{hochmanshmerkin2015}.

In the next section, after recalling some definitions, we make this sketch precise and formally state the main steps in the proof.

\subsection{Statement of the main steps in the proof} \label{Section outline}
We begin by recalling the notion of a model (as in e.g. \cite{Shmerkin2018Solomyak}):
Let $I$ be a  finite set of iterated function systems of similarities $\Phi^{(i)} = (f_1 ^{(i)},..., f_{k_i} ^{(i)})$, $i\in I$. We assume  that each IFS is homogeneous and uniformly contracting. That is, for every $i\in I$ there is some $r_i\in (-1,0)\cup (0,1)$ such that for every $1\leq j \leq k_i$ we have
$$f^{(i)} _j (x) = r_i \cdot x + t^{(i)} _j, \, \text{ where } t^{(i)} _j\in \mathbb{R}.$$
We allow $k_i$ to equal $1$ for some $i$, i.e. to have degenerate iterated function systems.

Let $\Omega = I^\mathbb{N}$. Given a sequence $\omega = (\omega_n)_{n\in \mathbb{N}} \in  \Omega $ we define the space of words of length $n$ (possibly with $n = \infty$) with respect to $\omega$ via
$$X_n ^{(\omega)} = \prod_{j=1} ^n \lbrace 1,...,k_{\omega_j} \rbrace.$$
Next, for every $\omega$ we define subsets of $\mathbb{R}$ via
$$Y^{(\omega)} = \left\lbrace \sum_{n=1} ^\infty \left( \prod_{j=1} ^{n-1} r_{\omega_j} \right) \cdot t^{(\omega_n)} _{u_n} :\, u \in X_\infty ^{(\omega)} \right\rbrace.$$
%Next, for every $u \in X_\infty ^{(\omega)}$ and $k\in \mathbb{N}$ we define the cylinders
%$$ Y_{k} ^{(\omega)} ([u|_k]) = \sum_{n=1} ^k \left( \prod_{j=1} ^{n-1} r_{\omega_j} \right) \cdot t_{\omega_j}$$
Thus, for every  $\omega$ we have a surjective coding map $\Pi_\omega : X^{(\omega)} _\infty \rightarrow Y^{(\omega)}$ defined by
\begin{equation} \label{eq:coding-map}
\Pi_\omega (u)= \sum_{n=1} ^\infty \left( \prod_{j=1} ^{n-1} r_{\omega_j} \right) \cdot t^{(\omega_n)} _{u_n},\quad u\in  X^{(\omega)} _\infty.
\end{equation}

Let $\sigma$ denote the left-shift on $\Omega$ (or other shift spaces), and note that for every $\omega\in\Omega$ the following \emph{dynamical self-similarity} relation holds:
\begin{equation} \label{Eq union}
Y ^{(\omega)} =  \bigcup_{u\in X_1 ^{(\omega)}} f_{u} ^{(\omega_1)} \left( Y^{\sigma(\omega)} \right).
\end{equation}
We say that the model satisfies the \textit{Strong Separation Condition} (SSC) if the union in  \eqref{Eq union} is disjoint for all $\omega\in\Omega$. Here we adopt the convention that the union of a single set is considered to be disjoint.

Next, for each $i\in I$, let $\mathbf{p}_i = (p_1 ^{(i)},...,p_{k_i} ^{(i)})$ be a probability vector with strictly positive entries. On each $X_\infty ^{(\omega)}$  we can then define the product measure $\bar{\eta} ^{(\omega)} := \prod_{n=1} ^\infty \mathbf{p}_{\omega_n}$. The projection of $\bar{\eta} ^{(\omega)}$ via the coding map is a Borel probability measure $\eta^{(\omega)}$   supported on $Y^{(\omega)}$.  For every $\omega$ the measure $\eta^{(\omega)}$  also satisfies a dynamical self similarity relation, namely,
\begin{equation} \label{eq:self-similarity}
\eta^{(\omega)} = \sum_{u\in X_1 ^{(\omega)}} p_u ^{(\omega_1)} f_u ^{(\omega)} \eta^{(\sigma(\omega))}.
\end{equation}
%Iterating this, for every $k$ we get
%$$\eta^{(\omega)} = \sum_{u\in X_k ^{(\omega)}} p_u ^{(\omega)} f_u ^{(\omega)} \eta^{(\sigma^k (\omega))}$$

Finally, let $\mathbb{P}$ be a $\sigma$-invariant measure on $\Omega$.  We refer  to the triple $\Sigma = ( \Phi^{(i)} _{i\in I}, (\mathbf{p}_i)_{i\in I}, \mathbb{P})$ as the model under consideration. We say that the model is \textit{non-trivial}  if $\eta^{(\omega)}$ is non-atomic for $\mathbb{P}$-a.e. $\omega$. We say that the model is \textit{ergodic} if the selection measure $\mathbb{P}$ is an ergodic measure on $\Omega$. We say that the model is \textit{Bernoulli} if the selection measure $\mathbb{P}$ is a Bernoulli measure on $\Omega$.

As indicated earlier, the proof of Theorem \ref{Main Theorem} consists of three main steps. The first step is to prove that every self similar measure admits a disintegration over the typical measures of a model. Furthermore, this model is Bernoulli, and the arithmetic properties of the original IFS are preserved in an appropriate sense:
\begin{theorem} \label{Theorem self similar}
Let $\mu$ be a non-atomic self similar measure with respect to an IFS $\lbrace \varphi_i(x)=s_i \cdot x+t_i \rbrace$, and let $\beta>1$. Then there exists a non-trivial Bernoulli model $\Sigma=\Sigma(\mu)$ with strong separation such that:
\begin{enumerate}
\item $\mu = \int \eta^{(\omega)} d \mathbb{P}(\omega)$.

\item If  $s_j \not \sim \beta$ for some $j$ then there exist $i\in I$ such that $r_i \not \sim \beta$.
\end{enumerate}
\end{theorem}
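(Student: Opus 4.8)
The plan is to build the model $\Sigma(\mu)$ so that its randomness decouples the two ``sources'' of non-separation in a self-similar IFS: exact overlaps and inexact overlaps that become negligible at small scales. The starting point is to iterate the original IFS $\{\varphi_i\}$: for each $n$, the $n$-fold composition IFS $\{\varphi_{i_1}\circ\cdots\circ\varphi_{i_n}\}$ has contraction ratios $s_{i_1}\cdots s_{i_n}$, and the same self-similar measure $\mu$. Now group the $|I|^n$ maps according to their contraction ratio: maps with the same product $s_{i_1}\cdots s_{i_n}$ form a homogeneous sub-IFS. The key geometric input (this is exactly the technique from \cite{Galcier2016Lq, Antti2018orponen, Shmerkin2018Solomyak}) is that one can further refine this grouping, using a random choice of which ``generation-$n$ cylinders'' to peel off at each step, so that the resulting measures $\eta^{(\omega)}$ are infinite convolutions whose component IFSs, at the chosen scales, are genuinely \emph{strongly separated} — two cylinders in the refined system are either identical (and then identified, contributing to the probability weights $\mathbf p_i$) or separated by a definite gap relative to their diameter. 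Concretely, I would fix a large integer $N$, let $I$ index the distinct contraction ratios appearing among length-$N$ words together with the ``separation pattern'' data needed to guarantee SSC, let $\mathbb P$ be the Bernoulli measure on $\Omega = I^{\mathbb N}$ coming from the product measure $\bar\eta$ pushed through this regrouping, and check that \eqref{eq:self-similarity} holds by construction so that item (1), $\mu = \int \eta^{(\omega)}\,d\mathbb P(\omega)$, is just the statement that integrating out the random choices recovers the original product measure on the symbolic space.

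For item (1) the main points to verify are: (a) the regrouped systems are homogeneous — automatic, since we group by contraction ratio; (b) the strong separation holds $\mathbb P$-a.e., which is where the probabilistic pigeonhole / random-peeling argument from the cited papers does the work, ensuring that after collapsing exact coincidences the remaining cylinders are uniformly separated; (c) non-triviality, i.e. $\eta^{(\omega)}$ is non-atomic a.e. — this should follow from non-atomicity of $\mu$ together with the disintegration, since an atom of positive mass for a positive-measure set of $\omega$ would force an atom of $\mu$; (d) the model is Bernoulli, which holds because $\bar\eta^{(\omega)} = \prod_n \mathbf p_{\omega_n}$ is a product measure and $\mathbb P$ is built as a Bernoulli measure on the regrouped alphabet. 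I would present (a)--(d) in that order, with (b) carrying essentially all the substance.

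For item (2), the arithmetic claim, suppose $s_j \not\sim \beta$ for some $j$. The contraction ratios $r_i$ of the model are products $s_{i_1}\cdots s_{i_N}$ over length-$N$ words (for whatever $N$ the construction uses). I need to produce \emph{one} such product that is multiplicatively independent of $\beta$. The clean way is: if \emph{every} length-$N$ product $s_{i_1}\cdots s_{i_N}$ satisfied $\sim\beta$, then in particular $s_j^N \sim \beta$ and $s_j^{N-1}s_k \sim \beta$ for any other index $k$, whence $s_j \sim \beta$ (dividing, $\log|s_j|/\log\beta \in \mathbb Q$ from $s_j^N \sim \beta$), contradicting $s_j \not\sim \beta$. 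Actually even more directly: $s_j^N \sim \beta$ is equivalent to $N\log|s_j| \in \mathbb Q\cdot\log\beta$, i.e. $\log|s_j|/\log\beta\in\mathbb Q$, i.e. $s_j \sim \beta$ — so the word $(j,j,\ldots,j)$ already has $r_i = s_j^N \not\sim \beta$. One subtlety: the construction might force $N$ or the selection of which words survive the regrouping to depend on $\beta$ in an awkward way, but since $(j,\ldots,j)$ is a legitimate length-$N$ word and gives an honest homogeneous (degenerate, $k_i=1$ is allowed) sub-IFS, it will be among the $\Phi^{(i)}$, and its ratio witnesses (2). I expect step (b) above — making the random peeling yield genuine strong separation while keeping the measure decomposition exact — to be the real obstacle; item (2) and the remaining bookkeeping in item (1) are comparatively routine once the construction is in place.
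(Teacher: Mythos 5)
Your overall architecture (iterate the IFS, build a Bernoulli model on the iterated alphabet, verify the disintegration identity by shift-invariance, and witness item (2) via the ratio $s_j^N$ of the constant word) matches the paper's, and your argument for item (2) is essentially the one the paper gives. However, the step you yourself flag as ``the real obstacle'' --- your step (b), obtaining strong separation --- is precisely the step you do not supply, and the route you sketch for it does not work. Grouping all length-$N$ words with a common contraction ratio into one homogeneous sub-IFS gives no control on separation: two such words can have cylinders that overlap partially without being identical, so your claimed dichotomy ``identical or separated by a definite gap'' is false in general, and the union in \eqref{Eq union} need not be disjoint. Deferring to a ``random peeling'' argument from \cite{Galcier2016Lq, Antti2018orponen, Shmerkin2018Solomyak} is not legitimate here: those papers produce the infinite-convolution disintegration but do \emph{not} produce strong separation --- the present paper explicitly identifies strong separation of the random measures as the crucial new feature of its construction. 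The missing idea is in fact much lighter than what you propose: since $K$ is infinite, after iterating to some level $M$ one finds (see \cite[Proof of Lemma 4.2]{shmerkin2015projections}) a \emph{single pair} of words $I,J$ with $\varphi_I'=\varphi_J'$ and $\text{Conv}(\varphi_I(K))\cap \text{Conv}(\varphi_J(K))=\emptyset$. The model then contains exactly one non-degenerate IFS, namely $\{\varphi_I,\varphi_J\}$, while \emph{every other} length-$M$ word is kept as its own degenerate singleton IFS with weight vector $(1)$. The SSC is then immediate: a union over a single map is disjoint by convention, and the one genuine two-fold union is separated because all the sets $Y^{(\omega)}$ lie inside $K$.

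Your argument (c) for non-triviality is also incorrect as stated: a positive-$\mathbb{P}$-measure set of $\omega$ for which $\eta^{(\omega)}$ has an atom does not force an atom of $\mu$, since the atoms may move with $\omega$ (Lebesgue measure on $[0,1]$ equals $\int \delta_x\,dx$). The correct argument uses the structure just described: by the SSC each coding map $\Pi_\omega$ is injective, so the $\eta^{(\omega)}$-mass of any generation-$n$ cylinder is at most $\max\bigl(\tfrac{p_1}{p_1+p_2},\tfrac{p_2}{p_1+p_2}\bigr)$ raised to the number of occurrences of the non-degenerate symbol among $\omega_1,\dots,\omega_n$, and that symbol occurs infinitely often $\mathbb{P}$-a.s.\ because $\mathbb{P}$ is Bernoulli with strictly positive weights. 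With the single-pair construction in place, your points (a), (d), item (1), and item (2) go through essentially as you describe.
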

This theorem is proved in Section \S\ref{Section proof of Theorem self similar}.

The second step in the proof of Theorem \ref{Main Theorem} is a general result about random model measures being uniformly scaling.  Before stating this result, we recall the definition of the scaling scenery of a measure, and related notions.  We  follow the notations as in \cite{hochmanshmerkin2015}.

Let $\mathcal{P}(X)$ denote the family of Borel probability measures on a metrizable space $X$, and define
\begin{equation} \label{M square}
\mathcal{M}^{\square} = \lbrace \mu \in \mathcal{P}([-1,1]):\, 0 \in \text{supp} (\mu) \rbrace.
\end{equation}
For $\mu\in \mathcal{P}(\mathbb{R})$ and $x\in \text{supp} (\mu)$, we define the translated and renormalized measure $\mu_{x}\in \mathcal{M}^{\square}$ by
	\begin{equation*}
	\mu_x(E) = c \cdot  \mu ( E+x),\quad \text{for }E\subset[-1,1] \text{ a Borel set.}
	\end{equation*}
Here $c=\mu([x-1,x+1])^{-1}$ is a normalizing constant. Note that in the definition of $\mu_x$, the measure $\mu$ does not need to be supported on $[-1,1]$ (but in any case $\mu_x\in\mathcal{M}^{\square}$).

For $\mu \in \mathcal{M}^{\square}$, we define the scaled measure $S_t \mu \in \mathcal{M}^{\square}$ by
	\begin{equation*}
	S_t \mu (E) = c' \cdot \mu (e^{-t}E),\quad \text{for }E\subset[-1,1] \text{ a Borel set},
	\end{equation*}
	where $c'=\mu([-e^{-t},e^{-t}])^{-1}$ is again a normalizing constant.

The \emph{scaling flow} is the Borel $\mathbb{R}^+$ flow $S=(S_t)_{t\geq0}$ acting on $\mathcal{M}^{\square} $. The \emph{scenery} of $\mu$ at $x\in \text{supp}(\mu)$ is the orbit of $\mu_x$ under $S$, that is, the one parameter family of measures $\mu_{x,t}:= S_t(\mu_x)$ for $t\geq0$. Thus, the scenery of the measure at some point $x$ is what one sees as one ``zooms into'' the measure with focal point $x$.

Notice that $\mathcal{P}(\mathcal{M}^{\square} ) \subseteq \mathcal{P}(\mathcal{P}([-1,1]))$. As is standard in this context, we shall refer to elements of $\mathcal{P}(\mathcal{P}([-1,1]))$ as distributions (and denote them by capital letters such as $Q$), and to elements of $ \mathcal{P}(\mathbb{R})$ as measures (and denote them by Greek letters such as $\mu$). A measure $\mu \in \mathcal{P}(\mathbb{R})$ \textit{generates a distribution} $P\in \mathcal{P}(\mathcal{P}([-1,1]))$ at $x\in \text{supp} (\mu)$ if the scenery at $x$ equidistributes for $P$ in $\mathcal{P}(\mathcal{P}([-1,1]))$, i.e. if
\begin{equation*}
\lim_{T\rightarrow \infty} \frac{1}{T} \int_0 ^T f(\mu_{x,t}) dt = \int f(\nu) dP(\nu),\quad \text{ for all } f\in C(\mathcal{P}([-1,1])).
\end{equation*}
(Any limits involving measures are understood to be in the weak topology.) We say that \textit{$\mu$ generates $P$}, and call $\mu$ a \textit{uniformly scaling measure},  if it generates $P$ at $\mu$ almost every $x$. If $\mu$ generates $P$, then $P$ is supported on $\mathcal{M}^{\square}$ and is $S$-invariant \cite[Theorem 1.7]{hochman2010dynamics}. Moreover, $P$ satisfies a sort of translation invariance known as the \emph{quasi-Palm} condition (see \cite{hochman2010dynamics}); we will not use this fact directly but it plays a key role in the proof of \cite[Theorem 1.2]{hochmanshmerkin2015}) which is one of the main tools in the proof of Theorem \ref{Main Theorem} (and is recalled as Theorem \ref{Thm: H-S} below).

We say that $P$ is \textit{trivial} if it is the distribution supported on the atom $\delta_{0} \in \mathcal{M}^{\square}$ - a fixed point of $S$. We can now state the second  main step towards the proof of Theorem \ref{Main Theorem}:
\begin{theorem} \label{Theorem model}
Let $\Sigma$ be a non-trivial Bernoulli model with strong separation. Then there exists a non-trivial  $S$-ergodic distribution $Q$ such that for $\mathbb{P}$-a.e. $\omega$ the measure $\eta^{(\omega)}$ generates $Q$.
\end{theorem}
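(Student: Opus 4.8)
The plan is to realize the base dynamical system explicitly as a suspension/skew-product over the shift $(\Omega,\mathbb{P},\sigma)$ and transfer its ergodicity to an $S$-ergodic distribution on $\mathcal{M}^{\square}$ via a natural coding. First I would fix $\omega\in\Omega$ and use the dynamical self-similarity \eqref{eq:self-similarity} together with the strong separation condition to identify, for each finite word $u\in X_n^{(\omega)}$, a well-defined cylinder piece $f_u^{(\omega)}\eta^{(\sigma^n\omega)}$ inside $\eta^{(\omega)}$; strong separation guarantees these pieces are pairwise disjoint at each level, so the coding map $\Pi_\omega$ is injective off a $\bar\eta^{(\omega)}$-null set, and the "address" of a point $x=\Pi_\omega(u)$ is unambiguous. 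The scenery of $\eta^{(\omega)}$ at $x$ is then governed, up to a bounded reparametrization of time, by the data $(\sigma^n\omega, \text{shift of }u)$ as $n$ increases and by where $x$ sits inside the rescaled cylinder — this is exactly the mechanism by which magnification at a $\mu$-typical point becomes the shift on $\Omega$ together with the Bernoulli shift on the fibre $X_\infty$.

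Concretely, I would build an auxiliary measure-preserving system on $\Omega\times X_\infty^{(\cdot)}\times[0,1)$ (the last coordinate recording the logarithmic position within the current cylinder, i.e. a suspension coordinate with roof function $-\log|r_{\omega_1}|$), with invariant measure $\mathbb{P}\ltimes\bar\eta^{(\omega)}\times\text{Leb}$, and check that this system is ergodic: since $\mathbb{P}$ is Bernoulli and the fibre measures are products, ergodicity of the skew product follows from ergodicity of the base Bernoulli shift by a standard argument (the roof function depends only on $\omega_1$, so the suspension is ergodic iff the base is, and the $X_\infty$ fibre carries a Bernoulli measure compatible with the shift). Then I would define a factor map $\mathcal{C}$ from this system to $\mathcal{M}^{\square}$ that sends a point $(\omega,u,s)$ to the renormalized measure obtained by placing $\Pi_\omega(u)$ at position $s$ inside $\eta^{(\omega)}$ and rescaling to $[-1,1]$; this map intertwines the suspension flow with the scaling flow $S$, so the pushforward $Q:=\mathcal{C}_*(\mathbb{P}\ltimes\bar\eta^{(\omega)}\times\text{Leb})$ is $S$-invariant, and it is $S$-ergodic because it is a factor of an ergodic flow. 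Non-triviality of $Q$ is immediate from non-triviality of the model: $\mathbb{P}$-a.e.\ $\eta^{(\omega)}$ is non-atomic, so the magnified measures are $\mathbb{P}$-a.e.\ not $\delta_0$, hence $Q\neq\delta_{\delta_0}$.

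Finally, to upgrade "$S$-invariant factor" to "$\eta^{(\omega)}$ generates $Q$ for $\mathbb{P}$-a.e.\ $\omega$", I would invoke the Birkhoff ergodic theorem for the suspension flow on the explicit ergodic system: for a.e.\ starting point the time-average of $f\circ\mathcal{C}$ along the flow converges to $\int f\,dQ$, and then Fubini in the $\Omega$-coordinate shows that for $\mathbb{P}$-a.e.\ $\omega$ this holds for $\bar\eta^{(\omega)}$-a.e.\ $u$, which says precisely that $\eta^{(\omega)}$ generates $Q$ at $\eta^{(\omega)}$-a.e.\ point; since $C(\mathcal{P}([-1,1]))$ is separable one handles all $f$ simultaneously. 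The main obstacle, and where I would spend the most care, is the passage from discrete magnification (zooming in by one cylinder level, i.e. the skew product) to the continuous scaling flow: one must control the scenery at non-lattice times $t$, showing that between consecutive levels the measures $\eta^{(\omega)}_{x,t}$ depend continuously (in a uniform-in-$\omega$ way) on the suspension coordinate, and that the strong separation condition is what prevents pieces of neighbouring cylinders from reentering the window $[-1,1]$ and corrupting the scenery — this uniform geometric control, including handling the boundary effects when a point sits near the edge of a cylinder, is the technical heart of the argument. A secondary subtlety is that the contraction ratios $r_i$ may be negative, so the coding and the reflection it induces must be tracked (working on $[-1,1]$ rather than $[0,1]$ absorbs this, but the bookkeeping should be done explicitly).
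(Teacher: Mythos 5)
Your proposal follows essentially the same route as the paper: a suspension flow with base the skew product $(\omega,u)$ (a Bernoulli shift on the symbols $(i,u)$), roof function $-\log|r_{\omega_1}|$, the factor map $(\omega,u,t)\mapsto S_t\eta_{\omega,u}$ intertwining the suspension flow with the scaling flow, and then the ergodic theorem plus separability of $C(\mathcal{P}([-1,1]))$ and a Fubini argument to get generation for $\mathbb{P}$-a.e.\ $\omega$. You also correctly identify the geometric point that strong separation must be upgraded so that neighbouring cylinders cannot re-enter the window $[-1,1]$; the paper handles this by normalizing so that the cylinder pieces are at mutual distance $>2$ (achieved by rescaling the translations, or by conditioning on a smaller window).

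The one place where your sketch is too optimistic is the orientation-reversing case. Working on $[-1,1]$ does \emph{not} absorb the reflections: when $r_{\omega_1}<0$, zooming in on $\eta_{\omega,u}$ by $-\log|r_{\omega_1}|$ produces the reflection of $\eta_{M(\omega,u)}$ about the origin, not $\eta_{M(\omega,u)}$ itself, so the base system must be enlarged to a $\mathbb{Z}_2$-extension recording the accumulated parity of reflections, and $Q$ must be supported on both the measures $\eta_{\omega,u}$ and their reflections. Crucially, the ergodicity of this $\mathbb{Z}_2$-extension is not automatic from ergodicity of the base; the paper proves it by realizing the extension as an irreducible Markov chain, and this is precisely where the Bernoulli hypothesis (rather than mere ergodicity of $\mathbb{P}$) is genuinely used. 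Without this step your argument, as written, establishes the theorem only in the orientation-preserving case.
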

In fact, we will derive an explicit expression for $Q$ as a factor of a suspension flow over a Markov measure. This theorem is proved in Section \S\ref{Section proof of Theorem model}.

The third and final step is to deduce Theorem \ref{Main Theorem} from Theorem \ref{Theorem self similar} and Theorem \ref{Theorem model}. To do this, we  study the pure point spectrum of the corresponding distribution $Q$, showing that it does not contain a non-zero integer multiple of $\frac{1}{\log \beta}$. Once this is established, we apply Theorem \ref{Thm: H-S}  to obtain the desired  pointwise normality result. See Section \S\ref{Section proof of main result} for more details.

\section{Proof of Theorem \ref{Theorem self similar}} \label{Section proof of Theorem self similar}
Let $\Phi = \lbrace \varphi_i(x)=s_i \cdot x+t_i \rbrace_{i=1} ^n$ be a self similar IFS. Fixing weights $\mathbf{p}$, we begin by constructing a model $\Sigma=\Sigma(\Phi,\mathbf{p})$. We then show that it meets the requirements of Theorem \ref{Theorem self similar}.
\subsection{Construction of the model} \label{Section construction of model}
Write $\mathcal{A}:=\lbrace 1,...,n \rbrace$ and let $\mathbf{p}$ be a strictly positive probability vector on $\mathcal{A}$. For every $M\in \mathbb{N}$ and $I=(i_1,...,i_m)\in \mathcal{A}^M$ write $\varphi_I =  \varphi_{i_1}\circ \cdots \circ \varphi_{i_M}$, and  let
$$ \Phi^M := \lbrace \varphi_I:\, I \in \mathcal{A}^M \rbrace.$$
Recall that we are assuming that the attractor $K_\Phi$ is infinite. It is not hard to check (see \cite[Proof of Lemma 4.2]{shmerkin2015projections}) that  there exist some $M\in\mathbb{N}$ and $I,J\in \mathcal{A}^M$ such that
$$\text{Conv} \left( \varphi_I (K_{\Phi}) \right) \bigcap \text{Conv} \left( \varphi_J (K_{\Phi}) \right) = \emptyset \, \text{ and } \varphi_I ' = \varphi_J ',$$
where $\text{Conv}(X)$ denotes the convex hull of a set $X$. In addition, notice that $\mu$, the self similar measure corresponding to $\Phi$ and $\mathbf{p}$, is also a self similar measure with respect to the IFS $\Phi^m$, with weights
$$\mathbf{p}^M = \lbrace  p_{i_1}\cdot \cdot \cdot p_{i_M}:\, i_j \in \mathcal{A} \text{ for all } j\rbrace.$$
The contraction factors of $\Phi^m$ include $s_i^m$, so the hypothesis that some $s_i\not\sim \beta$ is preserved by this iteration. This shows that we may assume, without loss of generality, that there exists $i,j\in \mathcal{A}$ such that
\begin{equation} \label{Eq implies SSC}
\text{Conv} \left( \varphi_i (K_{\Phi}) \right) \bigcap \text{Conv} \left( \varphi_j (K_{\Phi}) \right) = \emptyset \, \text{ and } \varphi_i ' = \varphi_j '.
\end{equation}
After relabeling, we may also assume $i=1,j=2$. We will use \eqref{Eq implies SSC} to show that the model satisfies the SSC.

We now define $I := \lbrace j:\, j=0 \text{ or } j\in \mathcal{A}\setminus\{1,2\} \rbrace$.
 For every index in $I$ we can associate an IFS as follows:
\begin{itemize}
\item For $j=0$ we associate the IFS $\lbrace \varphi_1, \varphi_2\rbrace$. %By \eqref{Eq implies SSC} this IFS satisfies the SSC and is homogeneous in the sense that $\varphi_1 ' = \varphi_2 '$.
\item For every $j\in I\setminus\{0\},$ we associate the degenerate IFS $\lbrace \varphi_j\rbrace$.
\end{itemize}
Notice that the construction above gives us a dichotomy: Each IFS in $I$ is either degenerate, or is homogeneous with strong separation in the sense of \eqref{Eq implies SSC}.

Recall that $\mathbf{p}$ was our fixed probability vector on $\mathcal{A}$. We now define a probability vector $\mathbf{q}$ on $I$ as follows:
\begin{itemize}
\item The mass $\mathbf{q}$ gives to $0$ is $p_1 + p_2$, that is, $q_0 = p_1 + p_2$.

\item For every $j \in I\setminus\{0\}$, the singleton $j$ gets mass $q_j= p_j$.
\end{itemize}
Recalling our notation from Section \S\ref{Section construction of model}, let $\mathbb{P}$ be the corresponding Bernoulli measure $\mathbf{q}^\mathbb{N}$ on $I^\mathbb{N}:=\Omega$. This $\mathbb{P}$ will be our (Bernoulli) selection measure.

Next, for $0\in I$ we define the probability vector
$$\tilde{\mathbf{p}}_{0} = \left(\frac{p_1}{p_1+p_2},\, \frac{p_2}{p_1+p_2} \right) = \left(\frac{p_1}{q_0},\, \frac{p_2}{q_0} \right). $$
For every $j\in I\setminus\{0\}$  we define the (degenerate) probability vector
$$\tilde{\mathbf{p}}_{j} = (1).$$
Recall from Section \S\ref{Section outline} that with these probability vectors, on each $X_\infty ^{(\omega)}$ we can then define the product measure $\bar{\eta} ^{(\omega)}$, and the projection of $\bar{\eta} ^{(\omega)}$ via the coding map is a Borel probability measure $\eta^{(\omega)}$ supported on $Y^{(\omega)}$.

\subsection{Proof of the required properties}
In this section we show that the model $\Sigma$ constructed in Section \S\ref{Section construction of model} satisfies the conclusion of Theorem \ref{Theorem self similar}. First,  the model $\Sigma$  is  Bernoulli by definition. Secondly,  the union in \eqref{Eq union} is disjoint for all $\omega$: if $\omega_1\neq 0$ there is nothing to do; otherwise, this follows from \eqref{Eq implies SSC} and the fact that all sets $Y^{(\omega')}$ are contained in the original attractor $K$. Since among the $|r_i|$ we find integer powers of all the original $|s_j|$, if there is $j$ such that $s_j \not \sim \beta$, then there is also some $i\in I$ such that $r_i \not \sim \beta$.

We proceed to prove part (1) of Theorem \ref{Theorem self similar}: Let  $\mu$ be the self similar measure that corresponds to the weights $\mathbf{p}$.   We will show that
$$\int \eta^{(\omega)} d\mathbb{P}(\omega) = \sum_{i\in \mathcal{A}} p_i \varphi_i  \int \eta^{(\omega)} d\mathbb{P}(\omega).$$
Since $\mu$ is the unique Borel probability measure satisfying this self-similarity relation, it will then follow that
$$ \mu =  \int \eta^{(\omega)} d\mathbb{P}(\omega).$$
We have
\begin{eqnarray*}
\int \eta^{(\omega)} d\mathbb{P}(\omega) &=& \int \left( \sum_{u\in X_1 ^{(\omega)}} \tilde{p}_u ^{(\omega_1)} f_u ^{(\omega)}  \eta^{(\sigma (\omega))} \right) d\mathbb{P}(\omega) \\
&=& \sum_{i\in I} \int_{[i]} \left( \sum_{u\in X_1 ^{(\omega)}} \tilde{p}_u ^{(\omega_1)} f_u ^{(\omega)}  \eta^{(\sigma (\omega))} \right) d\mathbb{P}(\omega) \\
&=& \sum_{j\in I, j\neq 0} p_j \int \left( \varphi_j  \eta^{(\sigma (j*\omega))} \right) d\mathbb{P}(\omega)  \\
&+& (p_1 + p_2)\cdot \int \left( \frac{p_1 }{p_1 + p_2} \varphi_1 \eta^{(\sigma (1*\omega))} + \frac{p_2 }{p_1 + p_2} \varphi_2 \eta^{(\sigma (2*\omega))} \right) d\mathbb{P}(\omega)\\
&=& \sum_{i\in \mathcal{A}} p_i \varphi_i \int \eta^{(\omega)} d\mathbb{P}(\omega).
\end{eqnarray*}
Which is what we claimed.

So far we have established all the claims in Theorem \ref{Theorem self similar}, except for the non-degeneracy of the model: We claim that for $\mathbb{P}$-a.e. $\omega$, the measure $\eta^{(\omega)}$ is non-atomic. Indeed, by the SSC for any $\omega$ the coding map $\Pi_\omega$ is injective. So by our choice of weights it follows that for every $n\in \mathbb{N}$ and $I_n \in X_n ^{(\omega)}$, we have
\begin{equation} \label{Eq non-deg}
\eta^{(\omega)} \left( \Pi_\omega \left(  I_n \right) \right) =  \bar{\eta} ^{(\omega)} \left( \lbrace I_n  \rbrace \right) \leq \left( \max \left\lbrace \frac{p_1 }{p_1 + p_2},\, \frac{p_2 }{p_1 + p_2} \right\rbrace \right)^{ \left|\lbrace 1 \leq k \leq n:\, \omega_k = 0 \rbrace \right|}.
\end{equation}
Since $\mathbb{P}$ is a Bernoulli measure on $I^\mathbb{N}$ with strictly positive weights, $\mathbb{P}$-a.s. the digit $0$ occurs in $\omega$ infinitely times. This shows that the right hand side of \eqref{Eq non-deg} tends to $0$ as $n\rightarrow \infty$ for $\mathbb{P}$-a.e. $\omega$. Since any point in $X_\infty^{(\omega)}$ is covered by sets $\Pi_\omega \left(  I_n \right)$ for all $n$, the measures $\eta^{(\omega)}$ are $\mathbb{P}$-a.s. non-atomic, as claimed. This concludes the proof of Theorem \ref{Theorem self similar}.

\section{Proof of Theorem \ref{Theorem model}} \label{Section proof of Theorem model}
We split out proof of Theorem \ref{Theorem model} into two cases. We first detail the case when each IFS in our model is orientation preserving, i.e. $r_{i}\in(0,1)$ for all $i\in I$; this case avoids certain technicalities. We then consider the case when we have at least one $i$ satisfying $r_i\in(-1,0)$.

\subsection{The orientation preserving case}
Assume the conditions of Theorem \ref{Theorem model} hold true. In particular, recall that we are assuming the SSC \eqref{Eq union}, and that the selection measure $\mathbb{P}$ is a non-degenerate Bernoulli measure.  Without loss of generality, we may assume that the sets $(f_u^{(\omega_1)}(Y^{(\sigma\omega)}))_{u\in X_1^{(\omega)}}$ are at distance $>2$ from each other for all $\omega\in\Omega$. See the discussion in \S\ref{subsec:removing-dist} on how to treat the general case.

First, we define the random pair $(\omega, u)$, where $\omega$  is drawn according to $\mathbb{P}$  and $u$ is drawn according to $\overline{\eta}^{(\omega)}.$  Let $\mathbb{P}'$ denote this distribution. In fact, $\mathbb{P}'$ is the Bernoulli measure on the symbols $\{ (i,u): i\in I, 1\le u\le k_i\}$ with weights $q_{i,u}=\mathbb{P}([i])\cdot p_u^{(i)}$. We let $M$ be the shift map on $\Omega'=\supp(\mathbb{P}')$. In particular,  $\mathbb{P}'$ is $M$-invariant and ergodic.  %Note that, thanks to the SSC, the map $M$ has the following geometric interpretation:
%$$ M\left( \omega,\, u\right) = \left(\sigma(\omega),\, \left( f_{u_1}^{(\omega_1)} \right)^{-1} (\Pi_\omega(u))\right)$$
%where $ f_u^{(\omega_1)} $ is the unique map such that $y \in  f_u ^{(\omega_1)} \left(Y^{(\sigma(\omega))} \right)$.
Write $\eta_{\omega,u}=\eta^{(\omega)}_{\Pi_{\omega}(u)}$ for simplicity; recall that this is the translation of $\eta^{(\omega)}$ that centers it at $\Pi_{\omega}(u)$.

\begin{Claim} \label{Claim-M'} We have:
$S_{-\log(r_{\omega_1})}\eta_{\omega,u} = \eta_{M(\omega,u)}$.
\end{Claim}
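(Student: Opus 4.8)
The claim is a direct computation unwinding the three operations involved: translation (centering at a coded point), the scaling flow $S_t$, and the shift map $M$ on $\Omega' = \Omega \ltimes \overline\eta$. The plan is to compute both sides as explicit measures on $[-1,1]$ and check they agree.

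First I would write out the self-similarity relation \eqref{eq:self-similarity} in the form that isolates the branch $u_1 \in X_1^{(\omega)}$: since $f_{u_1}^{(\omega_1)}(x) = r_{\omega_1} x + t^{(\omega_1)}_{u_1}$, and using the SSC (so that the pieces $f_v^{(\omega_1)}(Y^{(\sigma\omega)})$ are pairwise at distance $>2$, hence in particular disjoint and, crucially, any point of one piece has all the other pieces outside its radius-$1$ neighborhood), the restriction of $\eta^{(\omega)}$ to a neighborhood of $\Pi_\omega(u)$ of radius $\le 1$ sees only the branch indexed by $u_1$. Concretely, writing $u = u_1 * u'$ with $u' = \sigma(u) \in X_\infty^{(\sigma\omega)}$, one has $\Pi_\omega(u) = t^{(\omega_1)}_{u_1} + r_{\omega_1}\,\Pi_{\sigma\omega}(u')$, so $f_{u_1}^{(\omega_1)}$ maps the point $\Pi_{\sigma\omega}(u')$ (the center of $\eta_{\sigma\omega, u'}$) to $\Pi_\omega(u)$.

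Next I would track the normalizations. On the left side, $\eta_{\omega,u}$ is $\eta^{(\omega)}$ translated by $-\Pi_\omega(u)$ and renormalized so that it is a probability measure on $[-1,1]$; then $S_{-\log r_{\omega_1}}$ dilates by the factor $e^{-(-\log r_{\omega_1})} \cdot {}$— wait, by the definition $S_t\nu(E) = c'\nu(e^{-t}E)$, so $S_{-\log r_{\omega_1}}\nu(E) = c'\nu(r_{\omega_1} E)$, i.e. it stretches the measure out by the factor $1/r_{\omega_1}$, again renormalized to a probability on $[-1,1]$. The composite translation-then-dilation sends $\eta^{(\omega)}$, near $\Pi_\omega(u)$, to (a renormalization of) $\eta^{(\sigma\omega)}$ near $\Pi_{\sigma\omega}(u')$, because the affine map $x \mapsto (x - \Pi_\omega(u))/r_{\omega_1}$ is exactly $(f_{u_1}^{(\omega_1)})^{-1}$ followed by translation centering at $\Pi_{\sigma\omega}(u')$. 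On the right side, $M(\omega,u) = (\sigma\omega, \sigma u)$, so $\eta_{M(\omega,u)} = \eta^{(\sigma\omega)}_{\Pi_{\sigma\omega}(u')}$ is precisely that renormalized measure. The only subtlety is that $S_t$ and the map $\mu \mapsto \mu_x$ both truncate to $[-1,1]$ and renormalize by the mass of an interval; I would check that all the normalizing constants match by using that $\eta^{(\sigma\omega)}$ is a probability measure and that, thanks to the distance-$>2$ separation assumption, no mass from other branches enters the radius-$1$ window at any stage — so each "mass of the relevant interval" factor is literally the total mass carried by the branch $u_1$, which by \eqref{eq:self-similarity} is $p^{(\omega_1)}_{u_1}$, and these cancel cleanly.

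The main obstacle, such as it is, is purely bookkeeping: making sure the two normalizing constants ($c$ in the definition of $\mu_x$ and $c'$ in the definition of $S_t$) combine on the left to give exactly the single normalizing constant appearing on the right, and confirming that the distance-$>2$ hypothesis is exactly what is needed so that after translating to center at $\Pi_\omega(u)$ and before rescaling, the measure restricted to $[-1,1]$ is supported entirely on the $u_1$-branch (so that the rescaled object lands in $\mathcal M^\square$ with the correct total mass). There is no analytic difficulty; once the affine conjugation $(f_{u_1}^{(\omega_1)})^{-1}$ is identified with the translation-and-scaling operation $S_{-\log r_{\omega_1}} \circ (\cdot)_{\Pi_\omega(u)}$, the identity $S_{-\log r_{\omega_1}}\eta_{\omega,u} = \eta_{M(\omega,u)}$ falls out. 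I would present this as: (i) the point-centering identity $f_{u_1}^{(\omega_1)}(\Pi_{\sigma\omega}(\sigma u)) = \Pi_\omega(u)$; (ii) the SSC-based claim that $\eta^{(\omega)}$ near $\Pi_\omega(u)$, in a window of radius $1$, equals $p^{(\omega_1)}_{u_1}\cdot f_{u_1}^{(\omega_1)}\eta^{(\sigma\omega)}$ restricted there; (iii) a one-line check that $S_{-\log r_{\omega_1}}(\mu_x)$ for $\mu = f_{u_1}^{(\omega_1)}\nu$ and $x = f_{u_1}^{(\omega_1)}(y)$ equals $\nu_y$, since $|r_{\omega_1}|<1$ means the radius-$1$ window around $x$ pulls back to a radius-$\le 1$ window around $y$.
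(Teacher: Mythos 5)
Your proposal is correct and follows essentially the same route as the paper: the identity $\Pi_\omega(u)=f_{u_1}^{(\omega_1)}(\Pi_{\sigma\omega}(\sigma u))$, the distance-$>2$ separation to ensure only the $u_1$-branch of the dynamical self-similarity relation contributes inside the magnification window, and the observation that the centering-plus-scaling operation is the affine conjugation by $(f_{u_1}^{(\omega_1)})^{-1}$, with the normalizing constants absorbed at the end since both sides are probability measures on $[-1,1]$. No gaps.
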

\begin{proof}
The claim is essentially due to dynamical self-similarity and the SSC. Fix $\omega\in\Omega$, $u\in X_\infty^{(\omega)}$, and let $E\subset[-1,1]$ be a Borel subset. It follows from \eqref{eq:coding-map} that
\begin{equation} \label{eq:shift-ifs}
\Pi_\omega(u) = f_u^{(\omega_1)}(\Pi_{\sigma\omega}(\sigma u)).
\end{equation}
Write $x=\Pi_\omega(u)$, and note that for a Borel set $E\subset [-1,1]$ we have
$$S_{-\log(r_{\omega_1})}\eta_{\omega,u}(E)=c\cdot \eta^{(\omega)}(r_{\omega_1}\cdot E +x)=c\cdot f_{u}^{\omega_1}\eta^{(\sigma \omega)}(r_{\omega_{1}}\cdot E+x).$$ Indeed, by our assumption that the distance between the sets $f_v^{(\omega_1)}(Y^{(\sigma\omega)})$ is greater than $2$, at most one of them may intersect $r_{\omega_{1}}\cdot E+x$ (a set of diameter $<2$). Continuing this line of reasoning, we have
\begin{align*}
c\cdot f_{u}^{(\omega_1)}\eta^{(\sigma \omega)}(r_{\omega_{1}}\cdot E+x)&=c\cdot \eta^{(\sigma \omega)}\left(\frac{r_{\omega_{1}}\cdot E+x-t_{u}^{\omega_1}}{r_{\omega_1}}\right)\\
&=c\cdot \eta^{(\sigma \omega)}(E+(f_{u}^{(\omega_1)})^{-1}(x))\\
&=\eta_{M(\omega,u)},
\end{align*}
where we used \eqref{eq:shift-ifs} in the last line.
\end{proof}

\begin{Claim} \label{claim: Q gen.}
For $\mathbb{P}$-a.e. $\omega$, the measure $\eta^{(\omega)}$ generates the distribution $Q$ that is defined as follows: let $\overline{Q}$ be the suspension measure for the suspension flow with base $(\Omega',\mathbb{P}')$ and roof function $\rho(\omega,u)=-\log(r_{\omega_1})$. That is, denoting Lebesgue measure by $\lambda$,
\[
\overline{Q} = \frac{1}{\int -\log(r_{\omega_1}) \,d\mathbb{P}(\omega)} (\mathbb{P}'\times \lambda)|_{\{ (\omega,u,t): 0\le t< -\log(r_{\omega_1})  \}}.
\]
 Then $Q$ is the push-forward of $\overline{Q}$ under $(\omega,u,t)\mapsto S_t \eta_{\omega,u}$.
%$Q$ is the distribution $S_r T_{y_0} \eta^{(\xi)}$ where
%$$ \xi \sim \mathbb{P},\, y_0 \sim \eta^{(\xi)},\, \text{ and } r\sim \frac{\lambda([0,-\log r_{\xi_1}])}{\int - \log(r_{\omega_1} )\, d\mathbb{P}(\omega)}.$$
\end{Claim}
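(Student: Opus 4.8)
The plan is to derive this from Claim \ref{Claim-M'} together with the Birkhoff ergodic theorem applied to the suspension flow. First I would recall the general principle relating orbits under the scaling flow $S$ to orbits under a suspension flow: Claim \ref{Claim-M'} says that applying $S$ for time $\rho(\omega,u)=-\log(r_{\omega_1})$ to the centered measure $\eta_{\omega,u}$ produces $\eta_{M(\omega,u)}$, i.e. the map $(\omega,u)\mapsto \eta_{\omega,u}$ intertwines the ``first return'' dynamics with $M$. Consequently, for any $t\ge 0$, writing $t = \rho(\omega,u)+\rho(M(\omega,u))+\cdots+\rho(M^{k-1}(\omega,u)) + s$ with $0\le s<\rho(M^k(\omega,u))$ (where $k=k(\omega,u,t)$ is the number of complete returns), one has $S_t \eta_{\omega,u} = S_s\,\eta_{M^k(\omega,u)}$. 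This is exactly the statement that the scenery $t\mapsto S_t\eta_{\omega,u}$ is the image under the factor map $\pi\colon(\omega,u,s)\mapsto S_s\eta_{\omega,u}$ of the orbit of $(\omega,u,0)$ under the suspension flow with base $(\Omega',\mathbb{P}',M)$ and roof $\rho$.

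Second, I would invoke the ergodic theorem. Since $\mathbb{P}'$ is $M$-invariant and ergodic and $\rho$ is bounded and bounded below by $-\log(\max_i|r_i|)>0$, the suspension flow on the space $\{(\omega,u,t):0\le t<\rho(\omega,u)\}$ equipped with the normalized measure $\overline{Q}$ is ergodic, and its time averages converge: for $\overline{Q}$-a.e.\ point and for every $f\in C(\mathcal{P}([-1,1]))$,
\[
\lim_{T\to\infty}\frac1T\int_0^T f\big(\pi(\text{flow}_t(\omega,u,0))\big)\,dt = \int f\,dQ,
\]
where $Q=\pi_*\overline{Q}$. Because the base point $(\omega,u,0)$ sits at ``height zero'' over $(\omega,u)$, and the set of such base points is a graph (hence $\overline{Q}$-null), I cannot directly apply the ergodic theorem at these specific points; instead I would use that $\overline{\eta}^{(\omega)}$-a.e.\ $u$ gives a point $(\omega,u,0)$ that is \emph{generic} for $\overline{Q}$ — this follows from the ergodic theorem applied along the base transformation $M$ combined with a standard argument that Birkhoff averages over the flow can be computed from Birkhoff sums of the induced map (Kac/Ambrose--Kakutani), and genericity of $\mathbb{P}'$-a.e.\ base point transfers to genericity of the corresponding ``bottom'' point of the suspension. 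Fibering $\mathbb{P}'$ over $\mathbb{P}$, for $\mathbb{P}$-a.e.\ $\omega$ this holds for $\overline{\eta}^{(\omega)}$-a.e.\ $u$, which by definition means $\eta^{(\omega)}$ generates $Q$ at $\Pi_\omega(u)$ for $\eta^{(\omega)}$-a.e.\ point $\Pi_\omega(u)$; that is, $\eta^{(\omega)}$ generates $Q$.

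The main obstacle I expect is the careful passage between Birkhoff averages for the continuous-time suspension flow and Birkhoff sums for the discrete base map $M$, handled at the ``height zero'' slice which is a measure-zero set for $\overline{Q}$. One has to argue that the exceptional $\overline{Q}$-null set of non-generic points, when intersected with each fiber $\{(\omega,u,t):0\le t<\rho\}$, meets the bottom slice $\{t=0\}$ in an $(\mathbb{P}'\text{-null})$ set — this is where ergodicity of $M$ (not just of the flow) is used, via the explicit product structure $\overline{Q}\propto(\mathbb{P}'\times\lambda)|_{\{0\le t<\rho\}}$ and Fubini. A secondary point to check is that $\pi$ is well-defined and continuous: distinct representatives $(\omega,u,t)$ and $(M(\omega,u),u',t-\rho)$ give the same measure precisely by Claim \ref{Claim-M'}, and $S_t$ depends continuously on $t$ and on the measure in the weak topology. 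Both points are routine but need to be stated; everything else is a direct application of the suspension-flow ergodic theorem and the definition of ``generates $Q$''.
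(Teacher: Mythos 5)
Your proposal is correct and follows essentially the same route as the paper: both use Claim \ref{Claim-M'} to identify the scenery of $\eta_{\omega,u}$ with the $\pi$-image of a suspension-flow orbit over $(\Omega',\mathbb{P}',M)$ with roof $-\log(r_{\omega_1})$, and then reduce the continuous time averages to Birkhoff sums of the induced function and of the roof along $M$ (the paper carries out this Kac/Ambrose--Kakutani reduction explicitly, applying the ergodic theorem to $F(\omega,u)=\int_0^{-\log r_{\omega_1}}f(S_t\eta_{\omega,u})\,dt$ and to the roof separately, then invoking separability of $C(\mathcal{P}([-1,1]))$). The points you flag as needing care are exactly the ones the paper handles, so there is no gap.
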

\begin{proof}
Our goal is to show that for $\mathbb{P}'$ almost all $(\omega,u)$, the measure $\eta^{(\omega)}$ generates $Q$ at $\Pi_{\omega}(u)$, or in other words, the scenery $(S_t \eta_{\omega,u})_{t=0}^\infty$ equidistributes for $Q$. Fix $f\in C(\mathcal{P}([-1,1]))$  and let
\[
F(\omega,u) := \int_0^{-\log r_{\omega_1}} f(S_t \eta_{\omega,u})\,dt.
\]
By  Claim \ref{Claim-M'},
\begin{equation} \label{eq:ergodic-sum-to-integral}
\sum_{j=0}^{N-1} F(M^j(\omega,u)) = \int_0^{\sum_{j=1}^{N}-\log(r_{\omega_j})} f(S_t\eta_{\omega,u})\, dt.
\end{equation}
By the ergodic theorem, for $\mathbb{P}'$-a.e. $(\omega,u)$,
$$\lim_{N\to\infty} \frac{1}{N}\sum_{j=0}^{N-1} F(M^j(\omega,u))=  \int_{\Omega}\int_{X_\infty^{(\omega)}}\int_0^{-\log r_{\omega_1}} f(S_t
\eta_{\omega,u}) \, dt\,d\overline{\eta}^{(\omega)}(u)\,d\mathbb{P}(\omega)$$ and
$$\lim_{N\to\infty} \frac{N}{\sum_{j=1}^{N} -\log(r_{\omega_j})} = \frac{1}{\sum_i \mathbb{P}([i]) \log(1/r_i)}.$$
%\begin{align*}
%\lim_{N\to\infty} \frac{1}{N}\sum_{j=0}^{N-1} F(M^j(\omega,u)) &=  \int_{\Omega}\int_{X_\infty^{(\omega)}}\int_0^{-\log r_{\omega_1}} f(S_t
%\eta_{\omega,u}) \, dt\, d\overline{\eta}^{(\omega)}(u)\,d\mathbb{P}(\omega),\\
%\lim_{N\to\infty} \frac{N}{\sum_{j=1}^{N} -\log(r_{\omega_j})} &= \frac{1}{\sum_i \mathbb{P}([i]) \log(1/r_i)}.
%\end{align*}
Multiplying these two identities, recalling \eqref{eq:ergodic-sum-to-integral}, and using that $r_i$ is bounded away from $0$ and $1$, we see that for $\mathbb{P}$-almost all $\omega$ and $\overline{\eta}^{(\omega)}$-almost all $u$, we have
\[
\lim_{T\to\infty}\frac{1}{T}\int_0^T f(S_t \eta_{\omega,u})\,dt = \int f(\nu)\, dQ(\nu).
\]
Finally,  by the separability of $C(\mathcal{P}([-1,1]))$, for $\mathbb{P}$-almost all $\omega$ and $\overline{\eta}^{(\omega)}$-almost all $u,$ the equation above holds  for every $f\in C(\mathcal{P}([-1,1]))$. This proves the claim.
\end{proof}

\begin{proof}[Proof of Theorem \ref{Theorem model} in the orientation-preserving case]
First, by Claim \ref{claim: Q gen.}, for $\mathbb{P}$-a.e. $\omega$, the measure $\eta^{(\omega)}$ generates the distribution $Q$ as in Claim \ref{claim: Q gen.}. Furthermore, $Q$ is non-trivial, since the model is non-trivial. Finally, we need to establish the ergodicity of $Q$. This follows since $\overline{Q}$ is ergodic, being the suspension of an ergodic (in fact, Bernoulli) measure, and the map $(\omega,u,t)\mapsto S_t\eta_{\omega,u}$ taking $\overline{Q}$ to $Q$ is a factor map, as can be seen from   Claim \ref{Claim-M'}.
\end{proof}

We remark that our proof of Theorem \ref{Theorem model} in the orientation preserving case also works under the weaker assumption that $\Sigma$ is a non-trivial ergodic model. In this case $\mathbb{P}'$ is no longer Bernoulli but can still be seen to be ergodic. Our proof of this theorem in the orientation reversing case relies upon the ergodicity of certain group extensions. The ergodicity of these group extensions does not always hold if our model is just assumed to be ergodic. Fortunately however, ergodicity can be shown to hold if our model is Bernoulli.

\subsection{The case with some orientation-reversing map}
\label{subsec:orientation-reversing}

Throughout this section we will assume that our model is such that there exists at least one $i$ satisfying $r_{i}<0$. This case is slightly more complicated because as we zoom in on $\eta_{\omega,x},$ we are not necessarily going to see $\eta_{M(\omega,x)}$. Because of the presence of orientation reversing maps, it is possible that as we zoom in on $\eta_{\omega,x},$ we may see the image of $\eta_{M(\omega,x)}$ reflected around the origin. Our analogue of the map $M$ has to take this behaviour into account, and our distribution $Q$ also has to see these reflected measures.

We again assume the conditions of Theorem \ref{Theorem model} are satisfied and that the sets $(f_u^{(\omega_1)}(Y^{(\sigma\omega)}))_{u\in X_1^{(\omega)}}$ are at distance $>2$ from each other. We let $\mathbb{P}'$ be as in the orientation preserving case, and recall that its support is denoted by $\Omega'$. We define a map $M':\Omega'\times \mathbb{Z}_{2}\to \Omega'\times \mathbb{Z}_{2}$ as follows:
\[ M'(\omega,u,a) = \left\{ \begin{array}{ll}
\left(M(\omega,u),a\right) & \mbox{if $r_{\omega_1}>0$};\\
\left(M(\omega,u),a+1\right) & \mbox{if $r_{\omega_1}<0$}.\end{array} \right. \] We let $\mathfrak{m}$ denote the Haar measure on $\mathbb{Z}_{2}$ and let $\mathbb{P}''=\mathbb{P}'\times \mathfrak{m}.$

\begin{Lemma} \label{Lem-Markov-Chain}
$\mathbb{P}''$ is $M'$-invariant and ergodic; moreover, it is isomorphic to an ergodic Markov measure on an irreducible Markov chain.
\end{Lemma}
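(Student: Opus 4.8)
The plan is to realize $M'$ as a skew product over the shift $M$ on $\Omega'$ by the cocycle valued in $\mathbb{Z}_2$ that records the sign of $r_{\omega_1}$, and then to prove $M'$-invariance directly and ergodicity via a standard criterion for $\mathbb{Z}_2$-extensions of Bernoulli shifts. First I would observe that $M'$-invariance of $\mathbb{P}'' = \mathbb{P}' \times \mathfrak{m}$ is immediate: for each fixed value of $a$, the fiber map is $(\omega,u) \mapsto M(\omega,u)$ composed with a translation in the $\mathbb{Z}_2$-coordinate, and since $\mathbb{P}'$ is $M$-invariant and $\mathfrak{m}$ is translation-invariant, the product is preserved. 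Writing $c(\omega,u) \in \mathbb{Z}_2$ for the indicator of $r_{\omega_1}<0$, the extension is $(\omega,u,a)\mapsto (M(\omega,u), a + c(\omega,u))$, a classical group extension.

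For ergodicity I would use the fact that a $\mathbb{Z}_2$-extension of an ergodic system by a cocycle $c$ is ergodic if and only if $c$ is not a coboundary, i.e.\ there is no measurable $h:\Omega'\to\mathbb{Z}_2$ with $c = h\circ M - h$ (mod $2$). Since $c(\omega,u)$ depends only on $\omega_1$ (the first coordinate), and $\mathbb{P}'$ is a Bernoulli measure, I would argue the cocycle cannot be a coboundary: if it were, then $c$ would be cohomologous to $0$, but a nonconstant function of the single coordinate $\omega_1$ over a Bernoulli shift is a coboundary only in degenerate cases that are ruled out here because both signs occur with positive probability (there is at least one $i$ with $r_i<0$, and by the non-triviality/positivity of the weights at least one $i$ with $r_i>0$, so $c$ genuinely takes both values). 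A clean way to see non-coboundary-ness: the cocycle sum $\sum_{j=0}^{N-1} c(M^j(\omega,u))$ is, along $\mathbb{P}'$-a.e.\ orbit, a sum of i.i.d.\ $\{0,1\}$-valued random variables with mean strictly between $0$ and $1$, so by the law of large numbers it is unbounded and its parity changes infinitely often, which precludes $c$ from being a coboundary (a coboundary has bounded partial sums along a.e.\ orbit up to the bounded function $h$). Hence $M'$ is ergodic.

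To get the Markov description I would exhibit $\mathbb{P}''$ explicitly as a Bernoulli-times-$\mathbb{Z}_2$ measure and note that the system $(\Omega'\times\mathbb{Z}_2, M', \mathbb{P}'')$ is measurably isomorphic to a stationary Markov chain on the state space $\{(i,u,a): i\in I,\ 1\le u\le k_i,\ a\in\mathbb{Z}_2\}$: from state $(i,u,a)$ one moves to $(i',u',a')$ with probability $q_{i',u'}$ provided $a' = a + c_i$ where $c_i$ is the sign-indicator of $r_i$, and probability $0$ otherwise. The initial distribution assigns mass $\tfrac12 q_{i,u}$ to $(i,u,a)$. One checks stationarity (the marginal on $(i,u)$ is $q_{i,u}$ and the marginal on $a$ is uniform, both preserved) and that this Markov measure pushes forward to $\mathbb{P}''$ under the natural coding. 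Irreducibility of the chain follows from the positivity of all $q_{i,u}$ together with the fact that both parities of $a$ are reachable, which is exactly the statement that some $r_i<0$: from any state one can reach any $(i',u')$ in one step, and one can flip the parity by routing through a symbol with $r_{i'}<0$, so all states communicate. Ergodicity of the Markov measure is then equivalent to irreducibility, which matches the ergodicity established above.

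The main obstacle I expect is the non-coboundary argument underlying ergodicity of the extension; the invariance and the Markov reformulation are essentially bookkeeping. The subtlety is that cocycle-rigidity statements can fail for merely ergodic base systems — indeed the authors flag precisely this in the remark preceding the lemma — so the proof must genuinely use that $\mathbb{P}'$ is Bernoulli (equivalently, the i.i.d.\ structure making partial sums of $c$ a random walk with drift). I would therefore make sure the write-up invokes Bernoullicity at exactly the point where we conclude the cocycle sums are unbounded with infinitely many parity changes, rather than appealing to any general ergodicity-only criterion.
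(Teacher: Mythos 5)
Your third paragraph --- the explicit Markov chain on the states $(i,u,a)$, the check that $\tfrac12 q_{i,u}$ is stationary, the coding isomorphism, and ergodicity via irreducibility (any $(i',u')$ is reachable in one step, and the parity is flipped by routing through a symbol with negative contraction) --- is essentially the paper's proof and is correct and self-contained. The problem is that you present it only as a reformulation ``matching'' an ergodicity statement you claim to have already established, while the argument you actually lean on, the coboundary argument of your second paragraph, does not work as written.

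Two concrete issues. First, the ``clean way to see non-coboundary-ness'' is invalid for $\mathbb{Z}_2$-valued cocycles. The coboundary equation $c=h\circ M-h$ is an identity in $\mathbb{Z}_2$, so the telescoped relation reads $\sum_{j=0}^{N-1}c\circ M^j\equiv h\circ M^N-h \pmod 2$: the relevant partial sums live in a compact group and are trivially bounded, and their parity changing infinitely often is perfectly consistent with $c$ being a coboundary whenever the transfer function $h$ is nonconstant (then $h\circ M^N-h$ also changes parity infinitely often along a.e.\ orbit). The bounded-partial-sums obstruction is a criterion for cocycles valued in $\mathbb{R}$ or $\mathbb{Z}$, not in $\mathbb{Z}_2$. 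Second, your premise that $c$ takes both values with positive probability does not follow from the hypotheses: nothing in the model prevents every $r_i$ from being negative, in which case $c\equiv 1$. (The conclusion still holds then, since $c\equiv1$ being a coboundary would make $-1$ an eigenvalue of a Bernoulli, hence weakly mixing, system --- but that is a mechanism your argument does not see.) A correct cocycle-theoretic proof is possible, e.g.\ by writing a putative solution of $g\circ M=(-1)^{c}g$ as $g=(-1)^{S_n}\,g\circ M^n$ and using independence plus martingale convergence of $E[g\mid x_1,\dots,x_n]=(-1)^{S_n}E[g]$, which is where Bernoullicity genuinely enters; but the cleanest fix is simply to promote your Markov-chain paragraph to be the proof: once the isomorphism with the stationary chain is verified (noting, as the paper does, that $\omega$ and $a_1$ determine all later $a_j$), irreducibility --- which holds even when all $r_i<0$, since either parity is reached in at most two steps --- yields ergodicity with no cocycle analysis needed.
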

\begin{proof}
The state space of the Markov chain is $\mathcal{S}=\{ (i,u,a): i\in I, 1\le u\le k_i, a\in\mathbb{Z}_2\}$ and the probability of transitioning from $(i,u,a)$ to $(i',u',a')$ is $\mathbb{P}([i'])\cdot p_{u'}^{(i')}$ if $a=a'$ and $r_i>0$ or if $a\neq a'$ and $r_i<0$; and the transition is forbidden otherwise. Then
\[
\left( \frac{1}{2}\mathbb{P}([i])\cdot p_{u}^{(i)}\right)_{(i,u,a)\in\mathcal{S}}
\]
is readily checked to be a stationary probability for the Markov chain. The associated Markov measure $\widetilde{\mathbb{P}}''$ is supported on $\Omega'\times \mathbb{Z}_2^{\mathbb{N}}$ and satisfies
\[
\widetilde{\mathbb{P}}''([(\omega_1, u_1, a_1)\cdots (\omega_n, u_n, a_n)]) = \frac{1}{2} \mathbb{P}'([ (\omega_1, u_1)\cdots (\omega_n, u_n)]).
\]
Since $\omega$ and $a_1$ determine $a_j$ for $j\ge 2$ almost surely according to the transition rule, the projection $(\omega,u,a)\mapsto (\omega,u,a_1)$ provides a natural isomorphism between $(\Omega'\times \mathbb{Z}_2^{\mathbb{N}},\widetilde{\mathbb{P}}'',\sigma)$ and $(\Omega'\times\mathbb{Z}_2, \mathbb{P}'',M')$.

Finally, the subshift of finite type underlying the Markov chain is irreducible (in fact, one can get from one state to any other in at most two steps), so $\widetilde{\mathbb{P}}''$, and hence $\mathbb{P}''$, is ergodic.
\end{proof}

To each element of $\Omega'\times \mathbb{Z}_{2}$ we associate a measure as follows: Let $E\subset[-1,1]$ be a Borel set, then
\[ \eta_{\omega,u,a}(E) = \left\{ \begin{array}{ll}
\eta_{\omega,u}(E) & \mbox{if $a=0$};\\
\eta_{\omega,u}(-E) & \mbox{if $a=1$}.\end{array} \right. \]

\begin{Claim} \label{Claim-M''}
We have: $S_{-\log(|r_{\omega_1}|)}\eta_{\omega,u,a} = \eta_{M'(\omega,u,a)}$.
\end{Claim}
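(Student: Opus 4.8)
The plan is to establish Claim \ref{Claim-M''} by essentially the same computation as in Claim \ref{Claim-M'} (the orientation-preserving analogue), carefully tracking how the sign $a\in\mathbb{Z}_2$ and the sign of $r_{\omega_1}$ interact. First I would fix $\omega\in\Omega$, $u\in X_\infty^{(\omega)}$, $a\in\mathbb{Z}_2$, and a Borel set $E\subset[-1,1]$, and reduce to the case $a=0$ by the definition $\eta_{\omega,u,1}(E)=\eta_{\omega,u}(-E)$: scaling commutes with the reflection $E\mapsto -E$ (since $S_t$ dilates by $e^{-t}$, a symmetric operation), so $S_{-\log|r_{\omega_1}|}\eta_{\omega,u,1}(E)=S_{-\log|r_{\omega_1}|}\eta_{\omega,u}(-E)$, and it suffices to understand $S_{-\log|r_{\omega_1}|}\eta_{\omega,u}$ and then compose with the reflection at the end.

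Next I would compute $S_{-\log|r_{\omega_1}|}\eta_{\omega,u}(E)$ directly from the definitions, writing $x=\Pi_\omega(u)$ and using the dynamical self-similarity \eqref{eq:self-similarity} together with the SSC exactly as in the proof of Claim \ref{Claim-M'}: the key point is that $r_{\omega_1}\cdot E + x$ has diameter $<2$, so it meets at most one of the well-separated pieces $f_v^{(\omega_1)}(Y^{(\sigma\omega)})$, allowing us to replace $\eta^{(\omega)}$ by $p_u^{(\omega_1)}f_u^{(\omega_1)}\eta^{(\sigma\omega)}$ and have the normalizing constants match up. The map $(f_u^{(\omega_1)})^{-1}$ is $y\mapsto (y-t_u^{(\omega_1)})/r_{\omega_1}$, and this is where the sign of $r_{\omega_1}$ enters: when $r_{\omega_1}>0$ one gets $S_{-\log|r_{\omega_1}|}\eta_{\omega,u}(E) = \eta^{(\sigma\omega)}(E + (f_u^{(\omega_1)})^{-1}(x)) = \eta_{M(\omega,u)}(E)$ as before, whereas when $r_{\omega_1}<0$ the division by a negative number introduces a reflection, giving $S_{-\log|r_{\omega_1}|}\eta_{\omega,u}(E) = \eta^{(\sigma\omega)}(-E + (f_u^{(\omega_1)})^{-1}(x)) = \eta_{M(\omega,u)}(-E) = \eta_{M(\omega,u),1}(E)$.

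Finally I would assemble the four cases ($a\in\{0,1\}$ times $\operatorname{sign} r_{\omega_1}\in\{+,-\}$) and check they agree with the definition of $M'$: for $r_{\omega_1}>0$ we land on $\eta_{M(\omega,u),a}=\eta_{M'(\omega,u,a)}$, and for $r_{\omega_1}<0$ we pick up one extra reflection, landing on $\eta_{M(\omega,u),a+1}=\eta_{M'(\omega,u,a)}$; combining the starting sign $a$ with the sign flip amounts to addition in $\mathbb{Z}_2$, which is precisely the transition rule defining $M'$. I do not expect a serious obstacle here — the statement is a bookkeeping refinement of Claim \ref{Claim-M'}. The one point requiring a little care, and the closest thing to a ``hard part,'' is verifying that the reflection operation $\nu\mapsto(E\mapsto\nu(-E))$ genuinely commutes with the scaling flow $S_t$ and interacts correctly with the centering/normalization built into the $\eta_{\omega,u}$ notation (in particular that reflecting a measure centered at $\Pi_\omega(u)$ is consistent with how $\eta_{\omega,u,a}$ is defined on $[-1,1]$), so that no stray normalizing constant or translation is lost when the signs are tracked through the composition.
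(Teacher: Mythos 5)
Your proposal is correct and follows essentially the same route as the paper: both reduce to the computation of Claim \ref{Claim-M'} via dynamical self-similarity and the separation assumption, and both observe that dividing by the negative $r_{\omega_1}$ in $(f_u^{(\omega_1)})^{-1}$ is what produces the reflection recorded by the $\mathbb{Z}_2$ coordinate. Your organization (peeling off the $a=1$ reflection first by noting it commutes with $S_t$, then splitting on the sign of $r_{\omega_1}$) is a slightly cleaner packaging of the same four-case check the paper performs, of which it writes out only the case $r_{\omega_1}<0$, $a=0$ explicitly.
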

\begin{proof}
The claim follows from a case analysis based upon the sign of $r_{\omega_{1}}$ and the value of $a$. Note that the case where $r_{\omega_{1}}>0$ and $a=0$ is essentially  Claim \ref{Claim-M'}. For the purpose of brevity we do not cover each case here. We instead content ourselves with the case where $r_{\omega_{1}}<0$ and $a=0$. Let $E\subset [-1,1]$ be a Borel set. Then, using the separation between the sets   $(f_v^{(\omega_1)}(Y^{(\sigma\omega)}))_{v\in X_1^{(\omega)}}$, and denoting $x=\Pi_\omega(u)$, we have
\begin{align*}
S_{-\log(|r_{\omega_1}|)}\eta_{\omega,u,0}(E)&=c\cdot \eta^{(\omega)}(|r_{\omega_1}|\cdot E +x)\\
&=c \cdot f_{u}^{\omega_{1}}\eta^{(\sigma \omega)}(|r_{\omega_1}|\cdot E +x)\\
&= c \cdot \eta^{(\sigma \omega)}\left(\frac{|r_{\omega_1}|\cdot E +x-t_{u}^{\omega_1}}{r_{\omega_1}}\right)\\
&= c \cdot \eta^{(\sigma \omega)}\left(-E+(f_{u}^{\omega_{1}})^{-1}(x)\right)\\
&=\eta_{M(\omega,u)}(-E)=\eta_{M(\omega,u),1}(E)=\eta_{M'(\omega,u,0)}(E).
\end{align*}
\end{proof}

What remains of the proof of Theorem \ref{Theorem model} in the orientation reversing case is an almost identical argument to that presented in the orientation preserving case after Claim \ref{Claim-M'} is established.  For the purpose of completion, we mention that in this case the distribution $Q$ is defined as the push-forward of $\overline{Q}$ under the factor map $(\omega,x,a,t)\mapsto S_t\eta_{\omega,x,a}$, where $\overline{Q}$ is the suspension measure over $\mathbb{P}''$ with roof function $-\log |r_{\omega_1}|$.

\subsection{Removing the distance $>2$ assumption}
\label{subsec:removing-dist}

Finally, we discuss the general case when the smallest  distance between the sets $(f_u^{(\omega_1)}(Y^{(\sigma\omega)}))_{u\in X_1^{(\omega)}}$ is not larger than $2$ (note that by compactness, under the SSC there is indeed a smallest positive such distance). There are (at least) two ways to get around this. The first is to note that this can be achieved by rescaling all the translations of the IFS's in the model by a common factor. The second is, assuming $2 t_0>0$ is smaller than the smallest  distance between the sets $(f_u^{(\omega_1)}(Y^{(\sigma\omega)}))_{u\in X_1^{(\omega)}}$, to condition our measures not on $[-1,1]$ but rather on $[-t_0,t_0]$ in the magnification process. By the discussion in \cite[Section 1 and Section 3.1]{hochman2010dynamics}, the first way does not affect the uniformly scaling nature of the measures or the generated distributions, and the second has a corresponding very mild such effect.

\section{Proof of Theorem \ref{Main Theorem}} \label{Section proof of main result}

Let $S=(S_t)_{t=0}^\infty$ be a measurable (semi)flow on a space $X$, and let $P$ be $S$-invariant. Let $e(s)=\exp(2 \pi i s)$. The pure point spectrum $\Sigma(P,S)$ of $P$ is the set of all the $\alpha \in \mathbb{R}$ for which there exists  a non-zero measurable eigenfunction $\varphi:X \rightarrow \mathbb{C}$ such that $\varphi\circ S_t = e (\alpha t)\varphi$ for every $t\geq 0$, on a set of full $P$ measure. If $P$ is $S$-ergodic, then $|\varphi|$ is constant for any eigenfunction $\varphi$; in particular, any such measurable eigenfunction is in fact in $L^2(P)$. We also note that if $(Y,Q,S)$ is a factor of $(X,P,S')$ then $\Sigma(Q,S)\subset \Sigma(P,S')$: indeed, if $\Pi$ is the factor map, then any eigenfunction $\varphi$ on $Y$ gives rise to the eigenfunction $\varphi\circ\Pi$ (with the same eigenvalue) on $X$.

We now specialize to the scenery flow; recall Section \S\ref{Section outline}. In this case, the existence of an eigenvalue $\alpha$  indicates that some non-trivial feature of the measures generated by $P$ repeats periodically under magnification by $e^{1/\alpha}$. The following theorem of  Hochman and Shmerkin \cite[Theorems 1.1 and 1.2]{hochmanshmerkin2015} relates the pure point spectrum of a distribution generated by a measure with equidistribution. Recall that a distribution $P$ is called trivial if it is the distribution supported on $\delta_{0} \in \mathcal{M}^{\square}$.
\begin{theorem} \label{Thm: H-S}
Let $\mu$ be a measure generating a non-trivial $S$-ergodic distribution $P$, and let $\beta$ be a Pisot number. If $\Sigma(P,S)$ does not contain a nonzero integer multiple of $\frac{1}{\log \beta}$, then $\mu$ is pointwise $\beta$-normal. Furthermore, the same is true for $g\mu$ for all  $g \in \text{diff}^1 (\mathbb{R})$.
\end{theorem}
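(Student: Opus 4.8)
The plan is to deduce $\beta$-normality by converting the discrete dynamics of $T_\beta$ into the continuous scaling flow $S$ and extracting equidistribution from ergodicity of the time-$\log\beta$ map of $S$. I would organize the argument into four steps.

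First, I would translate the spectral hypothesis into an ergodicity statement for $S_{\log\beta}$. For an ergodic flow the time-$\tau$ map $S_\tau$ fails to be ergodic precisely when there is a nonzero eigenvalue $\alpha\in\Sigma(P,S)$ with $e(\alpha\tau)=1$, that is $\alpha\tau\in\mathbb{Z}$: the $S_\tau$-invariant $L^2$ functions decompose into flow-isotypic components, each a flow-eigenfunction $\varphi$ with $\varphi\circ S_\tau=e(\alpha\tau)\varphi$, and a nonconstant such $\varphi$ forces $\alpha\neq0$ and $\alpha\tau\in\mathbb{Z}$; conversely such an eigenfunction is $S_\tau$-invariant and nonconstant. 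Taking $\tau=\log\beta$, the assumption that $\Sigma(P,S)$ contains no nonzero integer multiple of $\frac{1}{\log\beta}$ is exactly the statement that $S_{\log\beta}$ is ergodic for $P$. By the pointwise ergodic theorem, for $P$-a.e.\ $\nu$ and every $F\in C(\mathcal{P}([-1,1]))$ the averages $\frac{1}{N}\sum_{n<N}F(S_{n\log\beta}\nu)$ converge to $\int F\,dP$.

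Second, I would upgrade the continuous equidistribution of the scenery to discrete equidistribution at $\mu$-typical points. Since $\mu$ generates $P$, for $\mu$-a.e.\ $x$ the scenery $(S_t\mu_x)_{t\geq0}$ equidistributes for $P$, but I need the arithmetic subsequence at times $n\log\beta$. The bridge is the fractal-distribution (quasi-Palm) structure of $P$ recalled in the excerpt: $P$ is not merely $S$-invariant but arises as the law of sceneries, so $P$-typical points can be identified almost everywhere with sceneries of $\mu$. Combining this identification with the a.e.\ convergence of Step 1 yields, for $\mu$-a.e.\ $x$, that $\frac{1}{N}\sum_{n<N}F(S_{n\log\beta}\mu_x)\to\int F\,dP$.

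Third, and this is where the Pisot hypothesis is indispensable, I would convert discrete equidistribution of the scenery into equidistribution of $\{T_\beta^n x\}$ for the Parry measure. Because $\beta$ is Pisot the $\beta$-adic cylinder intervals fall into finitely many shapes up to rescaling (the $\beta$-shift is sofic), so magnifying $\mu$ by $\beta^n$ is commensurable with descending the $\beta$-adic partition, and the phase $T_\beta^n x$ inside the ambient cell is a bounded-complexity function of the magnified picture $S_{n\log\beta}\mu_x$ together with a finite symbolic datum tracking the current cell shape. This lets me write $\frac{1}{N}\sum_{n<N}\phi(T_\beta^n x)$, for $\phi$ ranging over a countable dense family, as a Birkhoff average of a function on the scenery system, whose limit, computed against $P$ and the finitely many cell shapes, should equal $\int\phi\,d\nu_\beta$ for the Parry measure $\nu_\beta$. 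I expect this step to be the main obstacle: the spectral step is standard ergodic theory and the second step is bookkeeping within the fractal-distribution formalism, but making the magnification-to-digit correspondence precise, and checking that integrating over $P$ reproduces exactly the Parry measure, is where the genuine work and the Pisot input concentrate.

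Finally, for the $C^1$ statement I would show that $g\mu$ generates a distribution with the same pure point spectrum as $P$. At $\mu$-a.e.\ $x$ the map $g$ is differentiable with $g'(x)\neq0$, so under magnification the nonlinear part of $g$ washes out in the limit and $g$ acts on the local picture as the single affine map $y\mapsto g'(x)\,y$. Since the scaling flow already quotients out magnitude and $\Sigma(P,S)$ is unchanged by the reflection $E\mapsto -E$, the distribution generated by $g\mu$ at $g(x)$ coincides with $P$ up to reflection and still satisfies the spectral hypothesis; applying the criterion from the first three steps to $g\mu$ then gives its pointwise $\beta$-normality.
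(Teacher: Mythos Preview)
The paper does not prove this statement at all: Theorem~\ref{Thm: H-S} is quoted verbatim from Hochman--Shmerkin \cite[Theorems~1.1 and~1.2]{hochmanshmerkin2015} and used as a black box in the proof of Theorem~\ref{Main Theorem}. There is therefore no proof in the present paper to compare your proposal against.

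That said, your outline is a faithful high-level reconstruction of the Hochman--Shmerkin argument. Step~1 is exactly the classical spectral criterion for ergodicity of the time-$\tau$ map of an ergodic flow. Step~4 is also correct: a $C^1$ diffeomorphism acts on the scenery asymptotically as its derivative, so $g\mu$ generates $P$ (or its reflection) and the spectral hypothesis is preserved; this is essentially \cite[Proposition~1.9]{hochmanshmerkin2015}. Step~2 is where your sketch is vaguest: the passage from continuous-time equidistribution of the scenery to equidistribution along the arithmetic progression $n\log\beta$ is not automatic and genuinely uses the quasi-Palm/ergodic-fractal-distribution structure, not merely an ``identification of $P$-typical points with sceneries of $\mu$''. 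In the cited paper this is handled by building an extended system (a CP-process, or equivalently a Markov extension encoding the phase within the current $\beta$-cell) on which the discrete dynamics is itself ergodic. You correctly flag Step~3 as the heart of the matter: the Pisot assumption makes the $\beta$-shift sofic, which yields the finite bookkeeping you describe, and non-triviality of $P$ is what forces the resulting $T_\beta$-invariant measure to have positive dimension and hence to be the Parry measure. Your proposal identifies all the right ingredients and their roles, but as a stand-alone proof it would need the CP-process machinery of \cite{hochmanshmerkin2015} to make Steps~2 and~3 rigorous.
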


Finally, we recall a classical fact regarding eigenfunctions of suspensions over Markov measures; see \cite[Proposition 6.2]{ParryPollicott90} for the proof (of a more general statement).
\begin{theorem} \label{Thm:spectrum-suspension}
Let $(X,\mathbb{P},\sigma)$ be a subshift of finite type $X$ endowed with a Markov measure $\mathbb{P}$, let $\rho\in L^1(\mathbb{P})$ be a non-negative measurable roof function, and let $(X_\rho,\mathbb{P}_\rho,S)$ denote the corresponding suspension flow. If $\alpha\in \Sigma(\mathbb{P}_\rho,S)$, then there is $f\in C(X)$ such that
\begin{equation} \label{spectrum-suspension}
f(\sigma x) = e(\alpha \rho(x)) f(x), \quad x\in X.
\end{equation}
\end{theorem}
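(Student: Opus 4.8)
The plan is to reduce the statement to a cohomological equation on the base and then appeal to a Liv\v{s}ic-type regularity argument. First I would unpack the suspension: write $X_\rho=\{(x,s):x\in X,\ 0\le s<\rho(x)\}$ with flow $S_t(x,s)=(\sigma^n x,\,s+t-\rho_n(x))$, where $\rho_n=\sum_{j=0}^{n-1}\rho\circ\sigma^j$ and $n$ is the unique index with $\rho_n(x)\le s+t<\rho_{n+1}(x)$, and $\mathbb{P}_\rho$ is the normalization of $(\mathbb{P}\times\lambda)|_{X_\rho}$. Given $\alpha\in\Sigma(\mathbb{P}_\rho,S)$, fix a nonzero measurable $\Phi$ with $\Phi\circ S_t=e(\alpha t)\Phi$ holding $\mathbb{P}_\rho$-a.e.; for $\mathbb{P}$-a.e.\ $x$ the flow relation forces $s\mapsto e(-\alpha s)\Phi(x,s)$ to be a.e.\ constant on $[0,\rho(x))$, and calling this constant $f(x)$ we get $\Phi(x,s)=e(\alpha s)f(x)$ a.e. Matching the two expressions for $\Phi$ across the roof identification $(x,\rho(x)^-)\leftrightarrow(\sigma x,0)$ yields $f\circ\sigma=e(\alpha\rho)f$ $\mathbb{P}$-a.e.; since $|f\circ\sigma|=|f|$ a.e.\ and $\mathbb{P}$ is $\sigma$-ergodic, $|f|$ is a.e.\ a positive constant, normalized to $1$, and $f\not\equiv0$ because $\Phi\not\equiv0$.

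The remaining, and essential, task is to upgrade this a.e.\ measurable solution $f\colon X\to S^1$ of $f\circ\sigma=e(\alpha\rho)f$ to a genuinely continuous $\tilde f\in C(X)$ satisfying the equation \emph{everywhere} (and agreeing with $f$ $\mathbb{P}$-a.e.). This is exactly a measurable Liv\v{s}ic theorem for $S^1$-valued cocycles over a subshift of finite type carrying a Gibbs measure: a Markov measure on an irreducible SFT is the Gibbs state of a locally constant --- hence H\"older --- potential, and $e(\alpha\rho)$ is a continuous cocycle (indeed locally constant in the intended application, where $\rho$ depends on finitely many coordinates), so the hypotheses of the classical statement are met. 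The mechanism is: iterating gives $f(\sigma^n x)=e(\alpha\rho_n(x))f(x)$ a.e.; using the bounded-distortion property of the Gibbs measure and the summable variation of $e(\alpha\rho)$ one shows $f$ agrees a.e.\ with a function continuous along local stable and local unstable sets; the local product structure together with the Markov property glue these partial continuities into a single $\tilde f\in C(X)$; and continuity plus density of periodic points in $\supp\mathbb{P}$ promotes the identity $\tilde f\circ\sigma=e(\alpha\rho)\tilde f$ from a.e.\ to everywhere. Equivalently, one checks that every periodic-orbit obstruction $\prod_j e(\alpha\rho(\sigma^j p))$ equals $1$ --- forced by the existence of a measurable solution via recurrence to periodic orbits (shadowing/specification) --- and then the Liv\v{s}ic closing-lemma argument produces $\tilde f$ directly.

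I expect essentially all the difficulty to reside in the second paragraph: reducing the suspension eigenfunction to a base cohomological equation is routine Fubini-and-flow bookkeeping, whereas passing from an $L^2$ (a.e.)\ solution to an everywhere-continuous one is the substantive step, and it is exactly there that the SFT structure and some regularity of $\rho$ enter. For that reason I would not reprove the measurable Liv\v{s}ic theorem but simply cite \cite[Proposition 6.2]{ParryPollicott90}, which records a more general fact of this type; I would also remark that the conclusion $f\in C(X)$ tacitly requires $e(\alpha\rho)$ to be continuous, which is automatic for the locally constant roof functions used in this paper, so no regularity issue arises here.
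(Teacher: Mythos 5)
Your proposal is correct and follows the same route as the paper, which gives no independent proof of this statement and simply defers to \cite[Proposition 6.2]{ParryPollicott90}; your reduction of the flow eigenfunction to the base cohomological equation $f\circ\sigma=e(\alpha\rho)f$ and your identification of the measurable-to-continuous (Liv\v{s}ic-type) upgrade as the substantive step cited from that reference are both accurate. Your remark that the conclusion $f\in C(X)$ tacitly presupposes continuity of $e(\alpha\rho)$ --- harmless here since the roof functions in the application are locally constant --- is a fair observation about the statement as written.
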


With these ingredients, we can conclude the proof of theorem \ref{Main Theorem}.

\begin{proof}[Proof of Theorem \ref{Main Theorem}]

Let $\mu$ be a self similar measure that satisfies the condition of Theorem \ref{Main Theorem} with respect to a Pisot number $\beta>1$, and let $g\in \text{diff}^1 (\mathbb{R})$. First, apply Theorem \ref{Theorem self similar} to obtain a disintegration of $\mu$ according to the corresponding model $\Sigma = ( \Phi^{(i)} _{i\in I}, (\mathbf{p}_i)_{i\in I}, \mathbb{P})$. That is,
\begin{equation*}
\mu = \int \eta^{(\omega)} d\mathbb{P}(\omega).
\end{equation*}
It follows that
\begin{equation*}
g\mu = \int g\eta^{(\omega)} d\mathbb{P}(\omega).
\end{equation*}
Therefore, to show that $g\mu$ is pointwise $\beta$-normal  it is enough to show that $\mathbb{P}$-almost surely the measure $g \eta^{(\omega)}$ is pointwise $\beta$-normal.

By Theorem \ref{Theorem self similar} there is some $j\in I$ such that $|r_j| \not \sim \beta$.  Also, by Theorem \ref{Theorem model}, for $\mathbb{P}$-a.e. $\omega$ the measure $\eta^{(\omega)}$  generates the non-trivial $S$-ergodic distribution $Q$, constructed in Section \S\ref{Section proof of Theorem model}. Moreover, $(\mathcal{M}^\square,Q,S)$ arises as a factor of a suspension flow with an ergodic Markov measure on the base - recall Claim \ref{claim: Q gen.}, Lemma \ref{Lem-Markov-Chain} and the discussion at the end of \S\ref{subsec:orientation-reversing}.

The state space of the corresponding Markov chain is either $\{ (i,u):i\in I, u\in \{1,\ldots, k_i\}\}$ in the orientation-preserving case (in this case the Markov measure is actually Bernoulli), and $\{ (i,u,a):i\in I, u\in \{1,\ldots, k_i\},a\in\mathbb{Z}_2\}$ when at least one IFS in the model is orientation-reversing. In both cases, the roof function is $-\log(|r_i|)$ where $i$ corresponds to the current state of the Markov chain.  Applying \eqref{spectrum-suspension} to the fixed point $(j,1)^\infty$ in the orientation-preserving case, or applying \eqref{spectrum-suspension} twice to the periodic point $((j,1,0)(j,1,1))^\infty$ otherwise (which is allowed since $f$ is continuous), we deduce that $2\alpha \log(|r_j|)\in\mathbb{Z}$.

Since $|r_j|\not\sim \beta$, we see that $\alpha$ cannot be a non-zero integer multiple of $1/\log(\beta)$. As we discussed at the beginning of this section, the same holds for the factor $(\mathcal{M}^\square,Q,S)$. Now the desired conclusion follows from Theorem \ref{Thm: H-S}.

\end{proof}

\bibliography{bib}{}
\bibliographystyle{plain}

\end{document}